\theoremstyle{plain} 
\newtheorem{proposition}{Proposition}
\newtheorem{theorem}{Theorem}
\theoremstyle{remark}
\newtheorem{remark}{Remark}
\renewcommand{\d}{\mathrm{d}}
\numberwithin{equation}{section}
\author{Alessio Figalli}
\address{ETH Zürich, Department of Mathematics, Rämistrasse 101, 8092 Zürich, Switzerland}
\email{alessio.figalli@math.ethz.ch}
\author{Christoph Kehle}
\address{Institute for Advanced Study, School of Mathematics, 1 Einstein Drive, Princeton NJ, USA and ETH Zürich, Institute for Theoretical Studies, Clausiusstrasse 47, 8092 Zürich, Switzerland}
\email{c.kehle@ias.edu, christoph.kehle@eth-its.ethz.ch}
\title[The prescribed negative Gauss curvature problem
for graphs]{On the prescribed \\
negative Gauss curvature problem \\
for graphs}
\begin{document}
	
\maketitle
\thispagestyle{empty}

\begin{center}
{\it To Juan Luis Vázquez for his 75th birthday, with friendship and admiration.}
\end{center}

\begin{abstract}
We revisit the problem of prescribing negative Gauss curvature for graphs embedded in $\mathbb R^{n+1}$ when $n\geq 2$. The problem reduces to solving a fully nonlinear Monge--Amp\`ere equation that becomes hyperbolic in the case of negative  curvature. We show that the linearization around a graph with Lorentzian Hessian can be written as a geometric wave equation  for a suitable Lorentzian metric in dimensions $n\geq 3$. Using energy estimates for the linearized equation and a version of the Nash--Moser iteration, we show the local solvability for the fully nonlinear equation. Finally, we discuss some obstructions and perspectives on the global problem. 
\end{abstract}
\tableofcontents
\thispagestyle{empty}
\newpage
\section{Introduction}
The problem of showing the existence or non-existence of complete $n$-dimensional hypersurfaces $\mathcal{S}$ immersed in $\mathbb R^{n+1}$ with prescribed negative Gaussian curvature $K$ has a long and rich history. In the case of surfaces in $\mathbb R^3$, i.e.,  $n=2$, a classical result \cite{MR1500557} proved by Hilbert in 1901 states that there does not exist a complete smooth surface $\mathcal S$ of constant negative curvature $K$ immersed in $\mathbb R^3$. Hilbert's theorem was significantly strengthened by the later work of Efimov \cite{MR0167938}, who showed that there exists no complete $C^2$ surface $\mathcal S$ with negative curvature $K$  bounded away from zero, which is immersed in $\mathbb R^3$. In the positive direction, again for $n=2$, Hong (see \cite{MR1266477} and also \cite{MR2261749}) obtained a sufficient criterion in terms of the decay of the Gauss curvature measured by the geodesic distance to  admit an isometric embedding in $\mathbb R^3$. 
For higher dimensions $n\geq 3$, the question of the  existence of global hypersurfaces in $\mathbb R^{n+1}$ with prescribed negative Gauss curvature has remained widely open. 

In the paper at hand, we consider the problem of prescribed Gauss curvature for graphs in all dimensions $n\geq 2$.  In this setting the graph has to satisfy the prescribed Gauss curvature equation, see \eqref{eq:Gauss-curvature-equation} below, which is a fully nonlinear Monge--Amp\`ere equation. 
This setting has turned out to be particularly fruitful in the case of positive prescribed Gauss curvature \cite{MR3617963}  but also in the case of certain local questions where the Gauss curvature  changes sign, see e.g.\ \cite{MR2309171,MR3070566,MR3919494} and references therein. 

Let us assume that we have a smooth hypersurface $\mathcal S\subset \mathbb R^{n+1}$ given  as a graph, that is 
\begin{align*}
\mathcal S = \{ (x_1,\dots, x_n, u_{\mathcal S} (x_1, \dots, x_n) ) \colon x=(x_1,\dots, x_n) \in \mathbb R^n \} \subset \mathbb R^{n+1}.
\end{align*}
The normal to the hypersurface $\mathcal S $ is given by
\begin{align*}N = \frac{1}{\sqrt{1+|Du_{\mathcal S}|^2}} (-D u_{\mathcal S}, 1),
\end{align*} where $D$ denotes the Euclidean gradient in $\mathbb R^n$. For the first and second fundamental form we have the expressions
\begin{align*}
& I = \d x_1^2 + \dots \d x_n^2  + \bigg(\sum_{i=1}^n \partial_i u_{\mathcal S} \,\d x_i\bigg)^2, \;\qquad  II = \big(1+ |Du_{\mathcal S}|^2\big)^{-\frac 12}\sum_{i,j = 1, \dots n} D^2_{ij} u_{\mathcal S} \,\d x_i \d x_j.
\end{align*}
Noting that the Gauss curvature $K_{\mathcal S}$ is given by $K_{\mathcal S} = \frac{\det(II)}{\det(I)}$, the prescribed Gauss curvature equation for a graph $u_{\mathcal S}$ reads
\begin{align}\label{eq:Gauss-curvature-equation}
 \frac{\det D^2 u_{\mathcal S} }{\left(1+|Du_{\mathcal S}|^2\right)^{\frac{n+2}{2}}} = K_{\mathcal S}.
\end{align}
We see that the signature of the Hessian  $D^2u_{\mathcal S}$ is related to the sign of the  Gaussian curvature $K_{\mathcal S}$. More precisely, given positive $K_{\mathcal S}$, it is   natural to look for solution which are convex, i.e.,  with Hessian $D^2u_{\mathcal S}$ of signature $(+, \dots, +)$. In this case, \eqref{eq:Gauss-curvature-equation} constitutes an \emph{elliptic} Monge--Amp\`ere equation.  In the present case where $K_{\mathcal S} < 0$, the natural analog  is to consider solutions of Lorentzian signature  $(-, +, \dots, +)$, so that \eqref{eq:Gauss-curvature-equation} becomes a \emph{hyperbolic} Monge--Amp\`ere equation.

A general strategy to obtain solutions to (fully) non-linear problems like \eqref{eq:Gauss-curvature-equation} is to first linearize the equation around a fixed solution and then to show sufficiently strong estimates for the linearized equation. To eventually solve the nonlinear problem, a version of the implicit function theorem (for carefully chosen Banach spaces) will then show the existence of a solution in a neighborhood of the initial solution.  In addition, if sufficiently robust estimates are proved for the linearized problem, one may actually prove the existence of solutions far away from the initial solution by following a suitable path in the solution space. Moreover, other properties such as the regularity of solutions can be shown in this framework. We refer to \cite[Chapter 3.1.4.]{MR3617963} for more details about this method in the elliptic case.

The purpose of this paper is to investigate this strategy in the context of the hyperbolic Monge--Amp\`ere equation \eqref{eq:Gauss-curvature-equation} with $K_{\mathcal S}<0$.

\section{The linearized  prescribed negative Gauss curvature problem}
 In the following we will  derive the linearization of the prescribed Gauss curvature equation \eqref{eq:Gauss-curvature-equation} around a hypersurface $\mathcal S$ given as a graph over $u_{\mathcal S}$ with Lorentzian Hessian. To this end, given a $C^2$ function $u$, we introduce the operator
 \begin{align*} 
\Psi(u) :=  \frac{\det D^2 u }{(1+|Du|^2)^{\frac{n+2}{2}}} .
 \end{align*}

\begin{proposition}
	Denote the linearized operator associated to $\Psi$ around $u_{\mathcal S}$ with $L_{u_{\mathcal S}}$. Then,
\begin{align} 
L_{u_{\mathcal S}} (v) = \Psi(u_{\mathcal S}) \left( \operatorname{tr} \left(  (D^2u_{\mathcal S})^{-1}  D^2 v \right) -   (n+2) \frac{Du_{\mathcal S}\cdot  Dv}{ 1+|Du_{\mathcal S}|^2} \right).  \label{eq:linearized-equation}
\end{align}

\begin{proof}
Consider $u_\epsilon:= u_{\mathcal S} +\epsilon v$. Then, taking the derivative of $\Psi(u_\epsilon)$  with respect to $\epsilon$  and using the Jacobi formula for the derivative of the determinant (see for instance \cite[Lemma A.1]{MR3617963}), we obtain  \begin{align*}
\frac{\d}{\d \epsilon}\Psi(u_\epsilon)   = &  \frac{\det(D^2 u_\epsilon)  \operatorname{tr} \left(  (D^2u_\epsilon)^{-1}  D^2 v \right) }{(1+|D u_\epsilon |^2)^{\frac{n+2}{2}}} -   (n+2) \frac{\det(D^2 u_\epsilon) Du_\epsilon\cdot  D v }{ (1+|D u_\epsilon  |^2)^{\frac{n+4}{2}}}   \\ 
=   &\Psi(u_\epsilon) \left(\operatorname{tr} \left(  (D^2u_\epsilon)^{-1}  D^2 v \right) - (n+2) \frac{Du_\epsilon\cdot  Dv}{ 1+|Du_\epsilon|^2} \right).
\end{align*}
The result follows by taking $\epsilon=0$.
\end{proof}
\end{proposition}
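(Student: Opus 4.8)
The plan is to compute the Gateaux derivative of $\Psi$ at $u_{\mathcal S}$ in the direction $v$, i.e., $L_{u_{\mathcal S}}(v) = \frac{\d}{\d\epsilon}\big|_{\epsilon=0}\Psi(u_{\mathcal S}+\epsilon v)$, by differentiating the numerator $\det D^2 u_\epsilon$ and the denominator $(1+|Du_\epsilon|^2)^{(n+2)/2}$ separately and applying the quotient and chain rules. Here $u_\epsilon := u_{\mathcal S}+\epsilon v$, so $D u_\epsilon = Du_{\mathcal S} + \epsilon Dv$ and $D^2 u_\epsilon = D^2 u_{\mathcal S} + \epsilon D^2 v$.

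First I would recall the Jacobi formula for the derivative of the determinant: if $A(\epsilon)$ is a differentiable family of invertible matrices, then $\frac{\d}{\d\epsilon}\det A = \det A \cdot \operatorname{tr}(A^{-1} A')$. Applying this with $A(\epsilon) = D^2 u_\epsilon$, whose $\epsilon$-derivative is $D^2 v$, gives $\frac{\d}{\d\epsilon}\det D^2 u_\epsilon = \det(D^2 u_\epsilon)\operatorname{tr}((D^2 u_\epsilon)^{-1} D^2 v)$; note $D^2 u_\epsilon$ is invertible for small $\epsilon$ since $D^2 u_{\mathcal S}$ is Lorentzian, hence nondegenerate. For the denominator, the chain rule yields $\frac{\d}{\d\epsilon}(1+|Du_\epsilon|^2)^{(n+2)/2} = \frac{n+2}{2}(1+|Du_\epsilon|^2)^{n/2}\cdot 2\,Du_\epsilon\cdot Dv = (n+2)(1+|Du_\epsilon|^2)^{n/2}\,Du_\epsilon\cdot Dv$.

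Next I would assemble these via the quotient rule $\frac{\d}{\d\epsilon}\frac{f}{g} = \frac{f'g - fg'}{g^2}$ with $f = \det D^2 u_\epsilon$ and $g = (1+|Du_\epsilon|^2)^{(n+2)/2}$, so $g^2 = (1+|Du_\epsilon|^2)^{n+2}$. This produces
\begin{align*}
\frac{\d}{\d\epsilon}\Psi(u_\epsilon) = \frac{\det(D^2 u_\epsilon)\operatorname{tr}((D^2 u_\epsilon)^{-1}D^2 v)}{(1+|Du_\epsilon|^2)^{\frac{n+2}{2}}} - (n+2)\frac{\det(D^2 u_\epsilon)\,Du_\epsilon\cdot Dv}{(1+|Du_\epsilon|^2)^{\frac{n+4}{2}}},
\end{align*}
after simplifying the powers $n+2 + n/2 - (n+2) = n/2$ in the second term's denominator contribution, which combines with the remaining factor to give exponent $\frac{n+4}{2}$. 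Factoring out $\Psi(u_\epsilon) = \det(D^2 u_\epsilon)/(1+|Du_\epsilon|^2)^{(n+2)/2}$ from both terms rewrites this as $\Psi(u_\epsilon)\big(\operatorname{tr}((D^2 u_\epsilon)^{-1}D^2 v) - (n+2)\frac{Du_\epsilon\cdot Dv}{1+|Du_\epsilon|^2}\big)$.

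Finally I would set $\epsilon = 0$, at which point $u_\epsilon = u_{\mathcal S}$, and read off formula \eqref{eq:linearized-equation}. The calculation is entirely routine; the only point requiring a word of care is the invertibility of $D^2 u_\epsilon$ for small $\epsilon$, which is immediate from the Lorentzian (hence nondegenerate) signature assumption on $D^2 u_{\mathcal S}$, so there is no genuine obstacle here — the main "step" is simply bookkeeping the exponents of $(1+|Du_\epsilon|^2)$ correctly when applying the quotient rule.
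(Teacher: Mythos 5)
Your computation is correct and follows exactly the same route as the paper: differentiate $\Psi(u_{\mathcal S}+\epsilon v)$ in $\epsilon$, apply the Jacobi formula to the determinant in the numerator and the chain rule to the denominator, factor out $\Psi(u_\epsilon)$, and set $\epsilon=0$. The only difference is that you spell out the quotient-rule bookkeeping and the nondegeneracy of $D^2u_{\mathcal S}$ more explicitly; the paper leaves these implicit.
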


By considering the operator \eqref{eq:linearized-equation} we naturally ask what type of PDE it constitutes. It turns out that, for dimensions $n\geq 3$, we can realize \eqref{eq:linearized-equation} as a Laplace--Beltrami operator for a suitable pseudo--Riemannian metric that is conformal to the Hessian of $u_{\mathcal S}$.  
\begin{proposition}\label{prop:equation-as-geometric-wave-equation}
	 Let $n\geq 3$. 
The linearized prescribed Gauss curvature operator $L_{u_{\mathcal S}}$ in \eqref{eq:linearized-equation} is, up to the global factor \begin{align}\label{eq:definition-of-f}
f = -  | \det D^2u_{\mathcal S}|^{- \frac{n-1}{n-2}} ( 1+ |Du_{\mathcal S}|^2)^{ \frac{n(n+2)}{2(n-2)}},
\end{align}
the Laplace--Beltrami operator for the Lorentzian metric 
\begin{align}\label{eq:Lorentzian-metric}
g = g_{u_{\mathcal S}} = |\det(D^2 u_{\mathcal S})|^{\frac{1}{n-2}} (1+|D u_{\mathcal S}|^2 )^{-\frac{n+2}{n-2}} D^2u_{\mathcal S}. 
\end{align}
More precisely, the linearized Gauss curvature operator \eqref{eq:linearized-equation} satisfies
 \begin{align*}
\Box_g v = f L_{u_{\mathcal S}} v.
\end{align*}

 For $n=2$, the linearized prescribed Gauss curvature operator $L_{u_{\mathcal S}}$ can be written as
\begin{align}\label{eq:linearized-equation-n=2}
	L_{u_{\mathcal S}} = \Psi(u_{\mathcal S}) \left( \Box_m +  b^j \partial_j\right)
\end{align}
for  the Lorentzian metric $m=m_{u_{\mathcal S}}=D^2 u_{\mathcal S}$, 
with \begin{align*}b^j = -4\frac{\delta^{ji } \partial_i u_{\mathcal S}}{1+|Du_{\mathcal S}|^2}  + \frac{1}{2} m^{ji} \operatorname{tr} (m^{-1} \partial_i m ),\end{align*}
where we use the Einstein summation convention, and $m^{ij}$ denotes the inverse of $m_{ij}$. 
\begin{remark}
In the elliptic case this shows that, for dimensions $n\geq 3$, the linearized operator can be realized as the Laplacian on the analogous Riemannian manifold with metric \eqref{eq:Lorentzian-metric}. 
\end{remark}
\begin{remark}
For $n=2$, we see that we cannot write $L_{u_{\mathcal S}}$ as a Laplace--Beltrami operator for a Lorentzian metric.
\end{remark}
\begin{proof} We first let $n\geq 3$ and define $m := D^2 u_{\mathcal S}$. Then, we write  \eqref{eq:linearized-equation} as 
\begin{align}\label{eq:waveequationforgeneralperturbation}
(\Psi(u_{\mathcal S}) )^{-1} L_{u_{\mathcal S}}(v) =  	m^{ij} \partial_i \partial_j v - (n+2) \frac{\delta^{ij} \partial_i u_{\mathcal S} \partial_j v}{1 + |Du_{\mathcal S}|^2}.
\end{align}
We recall that the Laplace--Beltrami operator associated to a metric $g_{ij}$ is given as
\begin{align*}
	\Box_g = g^{ij} \partial_{i} \partial_j v + \frac{1}{\sqrt{|\det g |}} \partial_i ( g^{ij} \sqrt{|\det g|} ) \partial_j v 
\end{align*}
and we want to realize the right-hand side of \eqref{eq:waveequationforgeneralperturbation} as such an operator, up to a global factor. For the principal part, we obtain  $g^{ij} = \omega m^{ij}$ for some   factor $\omega$. Thus, $\det(g) = \omega^{-n}\det(m) $ and $\omega$ has to obey
\begin{align*}
	- (n+2) \frac{\partial_j u_{\mathcal S}}{1 + |Du_{\mathcal S}|^2}  \omega  & = \frac{\omega^{n/2} }{ \sqrt{|\det m|} } \partial_i \left(\omega^{1-n/2}  m^{ij} \sqrt{ |\det m| }\right), 
\end{align*}
or 
\begin{align}
\label{eq:grad us}
	-(n+2)  \frac{\textup{grad}  (u_{\mathcal S})}{1 + |Du_{\mathcal S}|^2} &= \frac{\omega^{n/2-1}}{\sqrt{|\det m|}} \textup{div} \left(m^{-1}  \omega^{1-n/2} \sqrt{|\det m| } \right) .
\end{align}
Now, using that $m^{-1} \det m$ is divergence free (being equal to the cofactor matrix of $m=D^2 u_{\mathcal S}$), the right hand side simplifies to 
\begin{align*}
	m^{-1}  \operatorname{grad} \log\left( \omega^{1-n/2} |\det m|^{-\frac 12}\right).
\end{align*}
Since the left hand side of \eqref{eq:grad us} can also be re-written as 
\begin{align*}
	-\frac{(n+2)}{2} m^{-1} \operatorname{grad}\log ( 1+|Du_{\mathcal S}|^2 )=m^{-1} \operatorname{grad}\log ( 1+|Du_{\mathcal S}|^2 )^{-\frac{n+2}{2}},
\end{align*}
we conclude that
\begin{align*}
	\omega^{1-n/2} := \sqrt{|\det m|} \left(1+|Du_{\mathcal S}|^2\right) ^{-\frac{n+2} {2}}
\end{align*}
or equivalently
\begin{align*}
	\omega  := | \det m|^{\frac{1}{2-n}} ( 1+ |Du_{\mathcal S}|^2)^{\frac{n+2}{n-2}}.
\end{align*}
Hence, with this choice, we finally obtain 
\begin{align*} 
	g_{ij} = | \det D^2 u_{\mathcal S}|^{\frac{1}{n-2}} (1+ |Du_{\mathcal S}|^2 )^{-\frac{n+2}{n-2} } ( D^2u_{\mathcal S})_{ij}
\end{align*}
with \begin{align*}\det g= \det(D^2u_{\mathcal S}) | \det D^2 u_{\mathcal S}|^{\frac{n}{n-2}} \left(1+|Du_{\mathcal S}|^2\right)^{-n (n+2)/(n-2)}.  \end{align*}
Thus, $(\Psi(u_s) )^{-1} L_{u_{\mathcal S}} = \omega^{-1} \Box_g$ and $f = \frac{\omega}{\Psi(u_s)}$. 

For the case $n=2$, we immediately see that the above computation fails and indeed inspecting \eqref{eq:waveequationforgeneralperturbation} we directly obtain \eqref{eq:linearized-equation-n=2}. 
\end{proof}
\end{proposition}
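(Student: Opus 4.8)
The plan is to verify that the right-hand side of \eqref{eq:waveequationforgeneralperturbation} can be matched to a Laplace--Beltrami operator by a single conformal factor, and to pin down that factor. First I would set $m := D^2 u_{\mathcal S}$ and seek a metric of the form $g^{ij} = \omega\, m^{ij}$, so that the principal (second-order) parts of $\Box_g$ and of $(\Psi(u_{\mathcal S}))^{-1} L_{u_{\mathcal S}}$ agree up to the scalar $\omega$; the whole content of the proposition is then the claim that the first-order parts can also be made to agree, which forces an ODE-type constraint on $\omega$. Writing out $\Box_g = g^{ij}\partial_i\partial_j + \tfrac{1}{\sqrt{|\det g|}}\partial_i(g^{ij}\sqrt{|\det g|})\partial_j$ and using $\det g = \omega^{-n}\det m$, I would equate the first-order coefficients with those in \eqref{eq:waveequationforgeneralperturbation}, obtaining \eqref{eq:grad us}.

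The key simplification — and the step I expect to be the main obstacle, or at least the place where the dimensional restriction $n\geq 3$ enters — is handling the divergence on the right-hand side of \eqref{eq:grad us}. Here I would use the classical fact that the cofactor matrix of a Hessian is divergence-free (row-wise), i.e.\ $\partial_i\big((\det m)\, m^{ij}\big) = 0$; this lets me pull $\det m$ through the divergence and reduces $\tfrac{1}{\sqrt{|\det m|}}\operatorname{div}(m^{-1}\omega^{1-n/2}\sqrt{|\det m|})$ to $m^{-1}\operatorname{grad}\log(\omega^{1-n/2}|\det m|^{-1/2})$. The left-hand side of \eqref{eq:grad us} is already a gradient, $-(n+2)\,\tfrac{\operatorname{grad} u_{\mathcal S}}{1+|Du_{\mathcal S}|^2} = m^{-1}\operatorname{grad}\log(1+|Du_{\mathcal S}|^2)^{-(n+2)/2}$, since $m^{-1} = (D^2 u_{\mathcal S})^{-1}$ annihilates $\operatorname{grad}$ composed with $D u_{\mathcal S}$ only in the sense that $D(\text{function of }Du_{\mathcal S})$ pairs correctly — more precisely $\operatorname{grad}\log(1+|Du_{\mathcal S}|^2) = \tfrac{2 D^2 u_{\mathcal S}\, Du_{\mathcal S}}{1+|Du_{\mathcal S}|^2}$, so applying $m^{-1}$ gives exactly $\tfrac{2 Du_{\mathcal S}}{1+|Du_{\mathcal S}|^2}$. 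Comparing the two gradient expressions and using that $m^{-1}$ is invertible, I can integrate (the two potentials must agree up to an additive constant, which can be absorbed) to get $\omega^{1-n/2} = \sqrt{|\det m|}\,(1+|Du_{\mathcal S}|^2)^{-(n+2)/2}$, hence $\omega = |\det m|^{1/(2-n)}(1+|Du_{\mathcal S}|^2)^{(n+2)/(n-2)}$; note this last inversion requires $n\neq 2$, which is precisely where the hypothesis is used.

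From $\omega$ I would then read off $g_{ij} = \omega^{-1} m_{ij} = |\det D^2 u_{\mathcal S}|^{1/(n-2)}(1+|Du_{\mathcal S}|^2)^{-(n+2)/(n-2)}(D^2 u_{\mathcal S})_{ij}$, matching \eqref{eq:Lorentzian-metric}, and compute $\det g = \omega^{-n}\det m = \det(D^2 u_{\mathcal S})\,|\det D^2 u_{\mathcal S}|^{n/(n-2)}(1+|Du_{\mathcal S}|^2)^{-n(n+2)/(n-2)}$. Since by construction $(\Psi(u_{\mathcal S}))^{-1}L_{u_{\mathcal S}} = \omega^{-1}\Box_g$, we get $\Box_g v = \omega\,\Psi(u_{\mathcal S})^{-1}\,\Psi(u_{\mathcal S})\cdot\Psi(u_{\mathcal S})^{-1}L_{u_{\mathcal S}}v$, i.e.\ $f = \omega/\Psi(u_{\mathcal S})$; substituting $\Psi(u_{\mathcal S}) = \det(D^2 u_{\mathcal S})(1+|Du_{\mathcal S}|^2)^{-(n+2)/2}$ and simplifying the exponents yields the stated formula \eqref{eq:definition-of-f} for $f$ (the sign comes from the Lorentzian signature: $\det(D^2 u_{\mathcal S}) < 0$ when the Hessian has signature $(-,+,\dots,+)$ in odd ambient dimension, and more robustly one tracks $|\det D^2 u_{\mathcal S}|^{n/(n-2)}/\det(D^2 u_{\mathcal S}) = -|\det D^2 u_{\mathcal S}|^{(n-1)/(n-2) - 1}\cdot(\text{sign factor})$). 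Finally, for $n=2$ the exponent $1/(n-2)$ and the step dividing by $2-n$ are undefined, so no such conformal factor exists; instead one simply leaves \eqref{eq:waveequationforgeneralperturbation} as is, reads the principal part $m^{ij}\partial_i\partial_j = \Box_m$ modulo its own first-order terms, and collects all first-order contributions — the $-(n+2) = -4$ term from \eqref{eq:waveequationforgeneralperturbation} together with the first-order part $\tfrac12 m^{ij}\operatorname{tr}(m^{-1}\partial_i m)\partial_j$ of $\Box_m$ itself — into the single vector field $b^j$ as displayed in \eqref{eq:linearized-equation-n=2}.
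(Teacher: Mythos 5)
Your proposal is correct and follows essentially the same route as the paper: the conformal ansatz $g^{ij}=\omega m^{ij}$, the divergence-free cofactor identity to rewrite the first-order constraint as matching gradients of logarithms, the inversion $\omega^{1-n/2}\mapsto\omega$ that fails at $n=2$, and the direct collection of first-order terms into $b^j$ for $n=2$. One small imprecision worth flagging: the sign of $\det D^2u_{\mathcal S}$ is negative for Lorentzian signature $(-,+,\dots,+)$ in \emph{every} dimension $n$ (not only odd), and the first-order coefficient of $\Box_m$ is $-\tfrac12 m^{ij}\operatorname{tr}(m^{-1}\partial_i m)$, so the $+\tfrac12$ term in $b^j$ arises as its negative rather than as ``the first-order part of $\Box_m$ itself''; neither slip affects the substance of the argument.
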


\section{The nonlinear prescribed negative Gauss curvature problem: Local analysis}
\label{sec:local-analysis}

In the previous section we have seen that, given a hypersurface $\mathcal S={\rm graph}(u_{\mathcal S})$ with Lorentzian Hessian $D^2u_{\mathcal S}$, the linearization  for the prescribed negative Gauss curvature problem around $\mathcal S$ constitutes a hyperbolic PDE. We will now consider the problem of constructing nearby hypersurfaces arising from small, localized perturbations of the Gauss curvature. 
Due to the hyperbolic nature of the equations, we will assume certain properties on the causal structure of the domain $\Omega\subset \mathbb R^n$ with respect to Lorentzian metric $m_{u_\mathcal S}:=D^2u_{\mathcal S}$ or, equivalently, its conformal rescaling $g_{u_\mathcal S}$ (see \eqref{eq:Lorentzian-metric})\footnote{In the case of $n=2$, we replace $g_{u_{\mathcal S}}$ by $m_{u_{\mathcal S}}=D^2u_{\mathcal S}$, see \eqref{eq:linearized-equation-n=2}.}.

We assume that $\Omega\subset \mathbb R^n$ is  a compact  lens-like shaped domain with boundary associated to  a smooth spacelike\footnote{A hypersurface $\Sigma\subset \mathbb R^n$ is called ``spacelike'' if  $g_{u_{\mathcal S}}\vert_{\Sigma}$ is Riemannian.} compact oriented hypersurface $\Sigma_0\subset \mathbb R^n $ with boundary. More precisely, we assume that the domain $\Omega$ is the image of a smooth diffeomorphism   $\varphi\colon [0,1]\times \Sigma_0\to \Omega \subset  \mathbb R^n $, where we require that:\\
 (1) for each $t\in [0,1]$, $\Sigma_t:= \varphi(t, \Sigma_0)$ is a   spacelike hypersurface;\\
  (2) $\varphi(0,\cdot)\colon \Sigma_0 \to \Sigma_0$ is the identity;\\
   (3) $\Sigma_0$ is a Cauchy hypersurface for $\Omega$;\footnote{Given a curve $\gamma:\mathbb R\to \mathbb R^n$, $\gamma$ is said ``timelike'' if $g_{u_{\mathcal S}}(\dot\gamma(t),\dot\gamma(t))<0$ for every $t \in \mathbb R$. Also, $\gamma$ is said ``inextendible timelike'' if there is no timelike curve that strictly contains it.
   Finally, given a domain $\mathcal U\subset \mathbb R^n$, a hypersurface $\Sigma\subset \mathcal U$ is called ``Cauchy for $\mathcal U$'' if, inside $\mathcal U$, it is met exactly once by every inextendible timelike curve.} \\
  (4) $ \varphi ([0,1] \times \partial \Sigma_0 )$ is spacelike.\\
   Since $\varphi  $ is a diffeomorphism, by a mild abuse of notation,
   we can define a smooth ``time'' map $t \colon \Omega \to [0,1]$ saying that $t(p)=t$ if $p \in \Sigma_t$ (hence, $p \in \Sigma_{t(p)}$ for any $p \in \Omega$).  Since the leaves $\Sigma_t$ are spacelike, we note  that $t$ is a smooth temporal function  satisfying $-g_{u_\mathcal S}^{-1} (\d t, \d t)>0$, which is uniformly bounded from above and uniformly bounded away from zero. We time-orient $\Omega$ by imposing that $-\nabla^{g_{\mathcal S}} t$ is future directed.  We shall use the notation $\Xi_t := \varphi( [0,t]\times \partial \Sigma_0)$, so that the boundary of $\Omega$ can be written as $\partial \Omega = \Sigma_0 \cup \Sigma_1 \cup \Xi_1$, where we recall that each piece is spacelike and that $\Sigma_0$ is a Cauchy hypersurface for $\Omega$.

Note that, for small perturbations of $u_{\mathcal S}$, i.e., $m_u := D^2u$ with $\|u-u_{\mathcal S}\|_{C^2(\Omega)}$ sufficiently small, the causal structure of $\Omega$ remains stable and $\Sigma_t$ will still foliate $\Omega$ as spacelike hypersurfaces with respect to the causal structure given by $m_u$. In particular, $\Omega$ is globally hyperbolic\footnote{By definition, a manifold is ``globally hyperbolic'' if it contains a Cauchy hypersurface. There are several other equivalent definition of global hyperbolicity, relating the existence of a  Cauchy hypersurface to the causal structure of $\Omega$.} with $\Sigma_0$ as Cauchy hypersurface.

To apply this construction to our setting, given a smooth function $u_{\mathcal S}:\mathbb R^n \to \mathbb R$ with Lorentzian Hessian $D^2u_{\mathcal S}$ and fixed a point $p \in \mathbb R^n$, it is always possible to construct a domain $\Omega$ containing $p$ satisfying the properties above. We have illustrated the domain $\Omega$ in \cref{fig:omega}.

 \begin{figure}
 	\centering
\begingroup%
  \makeatletter%
  \providecommand\color[2][]{%
    \errmessage{(Inkscape) Color is used for the text in Inkscape, but the package 'color.sty' is not loaded}%
    \renewcommand\color[2][]{}%
  }%
  \providecommand\transparent[1]{%
    \errmessage{(Inkscape) Transparency is used (non-zero) for the text in Inkscape, but the package 'transparent.sty' is not loaded}%
    \renewcommand\transparent[1]{}%
  }%
  \providecommand\rotatebox[2]{#2}%
  \newcommand*\fsize{\dimexpr\f@size pt\relax}%
  \newcommand*\lineheight[1]{\fontsize{\fsize}{#1\fsize}\selectfont}%
  \ifx\svgwidth\undefined%
    \setlength{\unitlength}{203.73461357bp}%
    \ifx\svgscale\undefined%
      \relax%
    \else%
      \setlength{\unitlength}{\unitlength * \real{\svgscale}}%
    \fi%
  \else%
    \setlength{\unitlength}{\svgwidth}%
  \fi%
  \global\let\svgwidth\undefined%
  \global\let\svgscale\undefined%
  \makeatother%
  \begin{picture}(1,0.2836658)%
    \lineheight{1}%
    \setlength\tabcolsep{0pt}%
    \put(0,0){\includegraphics[width=\unitlength,page=1]{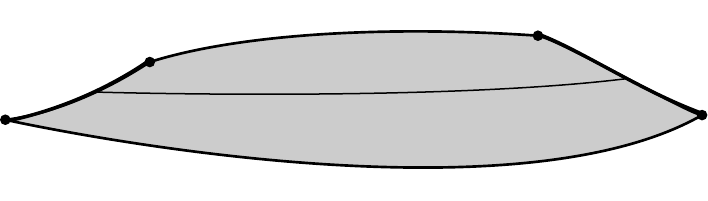}}%
    \put(0.31220511,0.00538509){\color[rgb]{0,0,0}\makebox(0,0)[lt]{\lineheight{1.25}\smash{\begin{tabular}[t]{l}$\Sigma_0$\end{tabular}}}}%
    \put(0.50186844,0.25484981){\color[rgb]{0,0,0}\makebox(0,0)[lt]{\lineheight{1.25}\smash{\begin{tabular}[t]{l}$\Sigma_1$\end{tabular}}}}%
    \put(0.47887403,0.16647768){\color[rgb]{0,0,0}\makebox(0,0)[lt]{\lineheight{1.25}\smash{\begin{tabular}[t]{l}$\Sigma_t$\end{tabular}}}}%
    \put(0.47887403,0.16647768){\color[rgb]{0,0,0}\makebox(0,0)[lt]{\lineheight{1.25}\smash{\begin{tabular}[t]{l}$\Sigma_t$\end{tabular}}}}%
    \put(0.86513066,0.19548396){\color[rgb]{0,0,0}\makebox(0,0)[lt]{\lineheight{1.25}\smash{\begin{tabular}[t]{l}$\Xi_1$\end{tabular}}}}%
    \put(0.05097084,0.15590678){\color[rgb]{0,0,0}\makebox(0,0)[lt]{\lineheight{1.25}\smash{\begin{tabular}[t]{l}$\Xi_1$\end{tabular}}}}%
  \end{picture}%
\endgroup%

\caption{Illustration of the  domain $\Omega$, the leaves  $\Sigma_t$ and the (possibly disconnected) boundary component $\Xi_1$.}
\label{fig:omega}
 \end{figure}

\subsection{Local solvability: \texorpdfstring{\cref{thm:local-existence}}{Theorem 1}}
We will now state the local existence theorem, where we work in the standard Fréchet space topology for $C^\infty(\Omega)$. 
\begin{theorem}
	\label{thm:local-existence}
For all $\eta \in C^\infty(\Omega)$ in a neighborhood of zero, there exists a solution $u_\eta \in  C^\infty(\Omega)$ satisfying 
	\begin{align}\label{eq:prescribed-curvature-equation}
		\frac{\det(D^2 u_\eta)}{(1+|Du_\eta|^2)^{\frac{n+2}{2}}} = K_{\mathcal S} +   \eta
	\end{align}
	 with $u_\eta\vert_{\Sigma_0}=u_{\mathcal S}\vert_{\Sigma_0}$ and $D u_\eta\vert_{\Sigma_0} =D u_{\mathcal S} \vert_{\Sigma_0}$. Moreover, the solution map
	 $$
	 C^\infty(\Omega) \ni \eta \mapsto u_\eta \in  C^\infty(\Omega)
	 $$
	 is smooth. 
 \end{theorem}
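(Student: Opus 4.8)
The plan is to rephrase \eqref{eq:prescribed-curvature-equation} as a nonlinear equation for the perturbation $v := u_\eta - u_{\mathcal S}$ and to solve it by a Nash--Moser iteration, the linear estimate at each step being an energy estimate for the geometric wave operator of \cref{prop:equation-as-geometric-wave-equation}. Set $\Phi(v) := \Psi(u_{\mathcal S}+v) - \Psi(u_{\mathcal S})$, so that $\Phi(0) = 0$ and \eqref{eq:prescribed-curvature-equation} reads $\Phi(v) = \eta$; we seek $v$ in the closed subspace $\mathcal V \subset C^\infty(\Omega)$ of functions vanishing together with their gradient on $\Sigma_0$. The derivative of $\Phi$ at $v$ is $D\Phi(v) = L_{u_{\mathcal S}+v}$ from \eqref{eq:linearized-equation}; writing $u' := u_{\mathcal S}+v$, \cref{prop:equation-as-geometric-wave-equation} (applied with $u'$ in place of $u_{\mathcal S}$) identifies it, up to a nonvanishing factor, with $\Box_{g_{u'}}$ when $n\ge 3$ and with $\Psi(u')(\Box_{m_{u'}} + b^j\partial_j)$ when $n=2$. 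In all cases $D\Phi(v)$ is a second-order operator, strictly hyperbolic with respect to the temporal function $t$ whenever $\|v\|_{C^2(\Omega)}$ is small, since then $D^2 u'$ stays uniformly Lorentzian with $|\det D^2u'|$ bounded away from $0$, and $\Omega$ remains globally hyperbolic with Cauchy hypersurface $\Sigma_0$ and spacelike lateral boundary $\Xi_1$.

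First I would establish the linear theory. Fix $u' = u_{\mathcal S} + v'$ with $\|v'\|_{C^2(\Omega)}$ small; the Cauchy problem
\begin{align*}
L_{u'} w = h \ \text{ in } \Omega, \qquad w|_{\Sigma_0} = 0, \qquad Dw|_{\Sigma_0} = 0
\end{align*}
should have a unique smooth solution with \emph{no} boundary condition on $\Xi_1$, together with tame estimates. Indeed, because $\Xi_1$ is spacelike and oriented so that the characteristic cones of $L_{u'}$ exit $\Omega$ through it, contracting the stress--energy tensor of $\Box_{g_{u'}}$ (for $n=2$, of $\Box_{m_{u'}}$, the term $b^j\partial_j$ being absorbed below) with the timelike field $-\nabla^{g_{u'}}t$ and applying the divergence theorem on $\varphi([0,t]\times\Sigma_0)$ gives a flux through $\Xi_t$ of favorable sign, which is discarded; one obtains an energy $E[w](t) \simeq \|w\|_{H^1(\Sigma_t)}^2$ satisfying $E[w](t) \lesssim \int_0^t (\|h\|_{L^2(\Sigma_s)}^2 + E[w](s))\,\d s$, whence $\sup_t E[w](t) \lesssim \|h\|_{L^2(\Omega)}^2$ by Gr\"onwall. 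Commuting the equation with coordinate derivatives --- whose commutators with $L_{u'}$ have coefficients controlled by $\|v'\|_{C^{k+2}(\Omega)}$, since the coefficients of $L_{u'}$ involve $D^2u'$ and $(D^2u')^{-1}$ --- and iterating yields tame higher-order estimates of the schematic form
\begin{align*}
\|w\|_{H^{k+1}(\Omega)} \le C_k\big(\|v'\|_{C^{k_0}(\Omega)}\big)\Big(\|h\|_{H^k(\Omega)} + \|v'\|_{C^{k+2}(\Omega)}\,\|h\|_{H^1(\Omega)}\Big),
\end{align*}
with existence and uniqueness of $w$ following from the standard duality and approximation argument on globally hyperbolic domains. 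Observe that the inverse of $D\Phi(v')$ gains only one derivative over the source, while $\Phi$ itself loses two; hence a plain Newton scheme does not close, and the derivative loss has to be absorbed by smoothing.

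I would then run the Nash--Moser iteration. Equip $C^\infty(\Omega)$ with smoothing operators $S_\theta$ adapted to $\mathcal V$, put $u_0 = u_{\mathcal S}$, and set $u_{j+1} = u_j + S_{\theta_j} w_j$, where $w_j \in \mathcal V$ solves $L_{u_j}w_j = g_j$ for the source $g_j$ prescribed by the scheme (the current defect $\eta - \Phi(u_j - u_{\mathcal S})$ together with the quadratic and smoothing errors). Since $\Psi$ is a polynomial in $D^2 u$ divided by a power of $1 + |Du|^2 \ge 1$, the maps $\Phi$ and its second derivative satisfy the usual tame estimates; combining these with the tame linear estimate above and the smoothing inequalities, the iterates converge in $C^\infty(\Omega)$ to a solution $u_\eta$ of \eqref{eq:prescribed-curvature-equation} with the prescribed Cauchy data on $\Sigma_0$, provided $\|\eta\|_{C^N(\Omega)}$ is small for a suitable $N$ --- that is, for all $\eta$ in a neighborhood of $0$ in the Fr\'echet topology of $C^\infty(\Omega)$. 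Uniqueness of the solution with the given Cauchy data and smoothness of the map $\eta \mapsto u_\eta$ then follow from the quantitative (tame) inverse function statement underlying the iteration, the linearized Cauchy problem being uniquely solvable along the whole sequence.

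The main obstacle is the linear theory on the lens-shaped domain: proving well-posedness of $L_{u'}w = h$ with data only on $\Sigma_0$ and no condition on the lateral boundary $\Xi_1$, and --- more delicately --- upgrading the basic energy identity to \emph{tame} higher-order estimates whose constants depend in a controlled, polynomial way on $\|v'\|_{C^k(\Omega)}$. The quasilinear structure, namely that the coefficients of $L_{u'}$ are built out of $D^2 u'$ and $(D^2 u')^{-1}$, is exactly what produces the one-derivative loss and makes the Nash--Moser scheme necessary; one must also propagate along the entire iteration the uniform Lorentzian and global-hyperbolicity bounds, which holds because the scheme keeps $\|u_j - u_{\mathcal S}\|_{C^2(\Omega)}$ small.
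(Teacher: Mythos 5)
Your proposal takes essentially the same route as the paper: identify $L_{u'}$ with the wave operator $\Box_{g_{u'}}$ (resp.\ $\Psi(u')(\Box_{m_{u'}}+b^j\partial_j)$ for $n=2$) via \cref{prop:equation-as-geometric-wave-equation}, derive a tame energy estimate by contracting the stress–energy tensor with a timelike multiplier, discard the favorably-signed flux through the spacelike lateral boundary $\Xi_1$, and absorb the one-derivative loss with a Nash--Moser iteration. The paper packages this slightly differently — it invokes Hamilton's abstract tame inverse function theorem with the map $u\mapsto(\Psi(u)-K_{\mathcal S},\,u|_{\Sigma_0},\,N_{\mathcal S}u|_{\Sigma_0})$ landing in a product tame space, rather than restricting to the subspace $\mathcal V$ of trivial Cauchy data and running the scheme by hand; and for the higher-order estimates it commutes with a fixed frame of $\Sigma_t$-tangential vector fields, recovering transverse derivatives from the equation, rather than with raw coordinate derivatives, which keeps the commutator structure a bit cleaner near $\Sigma_0$ — but these are equivalent formulations of the same argument.
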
 

\begin{remark}
The choice of the boundary conditions in \cref{thm:local-existence} can be relaxed. First, we could have chosen other  data for $u_\eta$ and $D u_\eta$ on $\Sigma_0$ as long as they are sufficiently close to $u_{\mathcal S}$ and $D u_{\mathcal S}$.  Secondly, instead of $\Sigma_0$, we could have also chosen any other Cauchy hypersurface and pose appropriate data there. 
\end{remark}

\begin{remark}
The choice of domain $\Omega$ can also be relaxed. For example, $\Omega$ does not necessarily have to be globally hyperbolic. In particular, $\Omega$ could also have a piece of timelike boundary\footnote{We recall that a hypersurface is spacelike/timelike if the induced metric on the hypersurface is Riemannian/Lorentzian, respectively.} and, in addition to Cauchy data on a suitable spacelike hypersurface, one would  have to prescribe  Dirichlet data on the timelike piece of the boundary. 
\end{remark}

\begin{remark}
In the elliptic case, an analogous result can be shown for Dirichlet boundary conditions using the implicit function theorem (see, for instance, \cite[Chapter 3.1.4.]{MR3617963}). However:\\
(i) In the hyperbolic case, well-posedness is only expected for the present Cauchy problem and generically we cannot achieve that $u_\eta = u_{\mathcal S}$ on the whole boundary $\partial \Omega$.\\
(ii) Hyperbolic problems (in contrast to elliptic ones) suffer from a loss of derivatives, so one cannot use the implicit function theorem.
Still, this loss of derivatives can be salvaged by using the more elaborate Nash--Moser iteration scheme, which we will use to prove \cref{thm:local-existence}. 
\end{remark}

\begin{remark}
Another possible way to show \cref{thm:local-existence} would be to differentiate \eqref{eq:prescribed-curvature-equation} and write it as a system of quasilinear wave equations, so as to avoid the use of Nash--Moser iteration. In particular, setting $v:= Du$ and differentiating \eqref{eq:prescribed-curvature-equation}, for $n\geq 3$ we obtain 
\begin{align*}
L_{v} v =  D(K_s + \eta),
\end{align*}
where $L_v$ is the linearized operator of \eqref{eq:linearized-equation} with $Du_{\mathcal S}$ replaced by $v$. 
In this case, one would also have to  propagate integrability conditions on $v$, so to find a function $u$ satisfying $v = Du$. 
\end{remark}

\begin{remark}
	We have stated \cref{thm:local-existence} in the $C^\infty$ category. By tracking down the exact regularity exponents in the proof,  it should be possible to show a quantitative version of \cref{thm:local-existence} in lower regularity. In order to show a result with optimal regularity, however, it would be probably beneficial to avoid Nash--Moser iteration but rather to rewrite \eqref{eq:prescribed-curvature-equation} as a quasilinear system for which one can derive sharper estimates. 
	\end{remark}

The key ingredient for the use of the Nash--Moser iteration (in fact, also the key ingredient in the case of a proof  via a quasilinear system) is a robust energy estimate for the linearized equation, which will be the content of the next lemma. To do so we first introduce the future directed timelike normal to the leaves of the foliation $$N_{\mathcal S}:= - \frac{\nabla^{g_{u_{\mathcal S}}} t}{|\nabla^{g_{u_{\mathcal S}}}t|_{g_{u_{\mathcal S}}} }$$ with respect to the Lorentzian metric induced by $g_{u_{\mathcal S}}$ (see \eqref{eq:Lorentzian-metric}), where we use the notation $ |\nabla^{g_{u_{\mathcal S}}}t|_{g_{u_{\mathcal S}}}  = (-g(\nabla^{g_{u_{\mathcal S}}}t,\nabla^{g_{u_{\mathcal S}}}t))^{\frac 12}$.  In the case of $n=2$, we replace $g_{u_{\mathcal S}}$ by $m_{u_{\mathcal S}}=D^2u_{\mathcal S}$. Note that $N_{\mathcal S}$  will remain timelike  with respect to all metrics $g_u$ for which $\|u-u_{\mathcal S}\|_{C^2(\Omega)}$ is sufficiently small. 
In particular, for all $u\in C^2(\Omega)$ with $\| u-u_{\mathcal S} \|_{C^2(\Omega)}<\epsilon_0$, the following constitutes a well-posed initial value  problem
\begin{align}\label{eq:linearized-equation-nash-moser}
	\begin{cases}
		L_{u}(v) = F\text{ in } \Omega \\
		v|_{\Sigma_0} = v_1\\
	N_{\mathcal S}v|_{\Sigma_0 } =v_2,
	\end{cases}
\end{align}
where  $L_u$ is the linearized operator from \eqref{eq:linearized-equation}, $v_1 , v_2 \in C^\infty(\Sigma_0)$, and $F\in C^\infty(\Omega)$. 
We will use the standard Euclidean background metric (and induced volume form) for the definitions of the Sobolev spaces $H^k(\Omega)$ and $H^{k}(\Sigma_t)$ for $k\geq 0$ on the compact manifolds $\Omega$ and $\Sigma_t$.  We will also use the notation $|\partial v |^2 = \sum_{i =1}^n |\partial_{x_i} v |^2$.  

\subsection{An energy estimate for the linearized problem}

\begin{proposition}[Energy estimate]\label{lem:energy-estimate}
	Let $u\in C^\infty(\Omega)$ satisfy $\| u-u_{\mathcal S} \|_{H^{[\frac{n-1}{2}] + 6} (\Omega)}\leq \epsilon_0$ for some $\epsilon_0 >0$ sufficiently small depending on $\Omega, u_{\mathcal S},n$.  Let $v\in C^\infty(\Omega)$ be the unique smooth solution to \eqref{eq:linearized-equation-nash-moser}. Then $v$ satisfies the following energy estimate:
\begin{multline} \nonumber
		\| v\|_{H^{k}(\Omega)}  \lesssim_{u_{\mathcal S}, \Omega,k,n}\;\;  \Big(\| F\|_{H^{[\frac n2]+1}(\Omega)} +  \|  v_1\|_{H^{[\frac{n-1}{2}]+3}(\Sigma_0)} +\|  v_2 \|_{H^{[\frac {n-1}{2}]+2}(\Sigma_0)}\Big) \| u\|_{H^{k+2}(\Omega)}\\
		+\|F \|_{H^{k-1}(\Omega)}   +    \|  v_1\|_{H^{k}(\Sigma_0)}  +  \|  v_2\|_{H^{k-1}(\Sigma_0)}  
	\end{multline}
for all $k\geq [\frac{n-1}{2}] + 3$. 
	\begin{proof}
		Throughout the proof we assume that implicit constants appearing in $\lesssim$ may depend on $\Omega, u_{\mathcal S}, n,\epsilon_0, k$. 
		
	We first consider the case $n\geq 3$. In this case, \eqref{eq:linearized-equation-nash-moser} is equivalent to  
		\begin{align} \label{eq:geometric-wave-equation}
			\Box_{g} v = G
		\end{align} 
		where $$
		g= |\det(D^2 u)|^{\frac{1}{n-2}} (1+|D u|^2 )^{-\frac{n+2}{n-2}} D^2u\qquad \text{and} \qquad G = f F,
		$$
		 with $f$ depending on $Du$ and $D^2u$ as defined in \eqref{eq:definition-of-f}.

		 We recall the standard energy identity  (i.e.,  the divergence theorem) for a  vector field $X$ on $\Omega$ for \eqref{eq:geometric-wave-equation}:
		\begin{align}\nonumber 
			\int_{\Sigma_1} J_\mu^X[v] N_{\Sigma_1}^\mu  \,\textup{vol}^g_{\Sigma_1}+ 	\int_{\Xi_1} & J_\mu^X[v] N^\mu_{\Xi_1}\,\textup{vol}^g_{\Xi_1} + \int_\Omega K^X[v]\,\textup{vol}^g_{\Omega} \\
			& = \int_{\Sigma_0} J_\mu^X[v] N_{\Sigma_0}^\mu  \,\textup{vol}^g_{\Sigma_0}+ \int_{\Omega} G\, X(v) \,\textup{vol}^g_{\Omega},\label{eq:geometric-energy-estimate}
		\end{align}
		where $K^X[v]= \frac 12  (\mathcal L_X g)^{\mu \nu} T_{\mu \nu}[v]$, $J^X[v]= T(X, \cdot )[v]$, and $N_{\Sigma_0}, N_{\Sigma_1} , N_{\Xi_1}$ are the future directed normal vector fields to $\Sigma_0, \Sigma_1$ and $\Xi_1$, respectively. Note that   $N := N_{\Sigma_t}=- \frac{\nabla^{g} t}{|\nabla^{g} t|_g }$ for each $t\in [0,1]$.  Further, $T[v]=(T_{\mu\nu}[v])$ and $T_{\mu\nu}[v] = \partial_\mu v \partial_\nu v - \frac 12 g_{\mu\nu} g^{\alpha \beta }\partial_\alpha v \partial_\beta v$ is the energy momentum tensor associated to \eqref{eq:geometric-wave-equation}.
		
		We will now apply the energy identity with the  timelike vector field 
		\begin{align*}X = e^{-a t} N 
		\end{align*} for some $a>0$ sufficiently large. 

First, we remark that for the future timelike vector field $X$  and any other future directed timelike vector field $Y$  we  have   $J_\mu^X[v] Y^\mu   \sim e^{-a t} |\partial v|^2 \geq 0 $ as $\Omega$ is compact and  $\| u-u_{\mathcal S}\|_{H^{k_0}} \leq \epsilon_0$. Hence, 
$$
J^X_\mu[v] N_{\Sigma_t}^\mu \sim  e^{-a t} |\partial v|^2\qquad\text{ and }\qquad J^X_\mu[v] N_{\Xi_t}^\mu \sim  e^{-a t} |\partial v|^2\qquad \forall\,t\in [0,1].
$$
In particular $\int_{\Xi_1}   J_\mu^X[v] N^\mu_{\Xi_1}\,\textup{vol}^g_{\Xi_1}  \geq 0$, so we can throw away this term from our energy estimate. 
	
	For the spacetime term involving $K^X$ in \eqref{eq:geometric-energy-estimate}, we note that
		\begin{align*}K^X =\frac 12 ( \mathcal L_{X} g)^{\mu \nu} T_{\mu \nu} =\frac 12 e^{-a t} (\mathcal L_N g)^{\mu \nu} T_{\mu \nu} + a e^{-a t} T(-\nabla^g t, N) ,
		\end{align*} 
where
	$$
	  T(-\nabla^g t, N)   \sim |\partial v|^2\qquad \text{ and }
	  \quad |(\mathcal L_{N}g)^{\mu\nu} T_{\mu \nu} |\lesssim |\partial v|^2.
	  $$
	Thus, by the co-area formula, we obtain
	\begin{align*}
 \int_\Omega & K^X[v] \textup{vol}^g_{\Omega}  = \int_0^1 \int_{\Sigma_t} K^X[v] \frac{1}{|\nabla^{g} t|_g}\,\textup{vol}^g_{\Sigma_t} \d t  \\ &  \sim \int_0^1   \int_{\Sigma_t} \left( \frac 12 (\mathcal L_{N}g)^{\mu\nu} T_{\mu \nu}  + a T(- \nabla^g t, N) \right) e^{- at} \,\textup{vol}^g_{\Sigma_t} \d t \sim   a \int_0^1 \int_{\Sigma_t}  e^{-a t} |\partial v|^2 \,\textup{vol}^g_{\Sigma_t} \d t 
\end{align*} for all $a>0$ sufficiently large.

	For the term   $\int_{\Omega} G\, X(v) \,\textup{vol}_{\Omega}$ we use Young's inequality
	$$
	\int_{\Omega} G\, X(v) \,\textup{vol}_{\Omega}=\int_{\Omega}e^{-at} G\, N(v) \,\textup{vol}_{\Omega} \lesssim \int_{\Omega} e^{-at} G^2   \,\textup{vol}_{\Omega}+\int_{\Omega} e^{-at}|\partial v|^2 \,\textup{vol}_{\Omega}.
	$$
In this way, we end up with the bound 
\begin{align*}
\int_{\Sigma_1} e^{- at } |\partial v|^2 + a \int_0^1 \int_{\Sigma_t} e^{-a t} |\partial v|^2 & \lesssim  \int_{\Omega} e^{- at}   |G|^2 + 	\int_{\Sigma_0}   |\partial v|^2 
\end{align*}
(here we integrate with respect to the Euclidean volume form induced on $\Sigma_t$ and $\Omega$, respectively, and we use that Euclidean volume forms on $\Omega$ and $\Sigma_t$ are comparable to the volume form induced by $g$). 
Actually, by integrating only in the region from $\Sigma_0$ to $\Sigma_t$ for $t\leq 1$, and then taking the supremum in $t$, we obtain the slightly better estimate:
\begin{align} \label{eq:energy-estimate-with-weight}
\sup_{0\leq t \leq 1}	\int_{\Sigma_t} e^{- at } |\partial v|^2 + a \int_0^1 \int_{\Sigma_t} e^{-a t} |\partial v|^2 \lesssim  \int_{\Omega} e^{- at}   |G|^2+	\int_{\Sigma_0}  |\partial v|^2
\end{align}
	for all $a>0$ sufficiently large depending on $\Omega, u_{\mathcal S}, n$.

For higher order estimates, we will commute the equation with a collection of fixed  vector fields $(Z_i)_{1\leq i \leq n}$, defined on $\Omega $, which are tangential to $\Sigma_t$ for each $t$ and span the tangent space to $\Sigma_t $ at each point. For concreteness we fix a collection of vector field spanning the tangent bundle of $\Sigma_0$ and use $\varphi(t, \cdot)\colon \Sigma_0 \to \Sigma_t$ to pushforward them to vector fields on $\Omega$, which are by construction tangential to each $\Sigma_t$.   We will use standard multiindex notation $Z^\alpha$ and remark that $\| f \|_{H^k(\Sigma_t)} \sim \sum_{|\alpha | \leq k} \|  Z^{\alpha} f \|_{L^2(\Sigma_t)}$, where the implicit constant depends on the choice of vector fields, which we keep fixed from now on. Note that
		 \begin{align*}\Box_g Z^{\alpha} v = Z^{\alpha} G - [\Box_g , Z^{\alpha}] v.\end{align*} 
		Hence, from \eqref{eq:energy-estimate-with-weight}  we get
		\begin{align*}\nonumber
		\sup_{0\leq t \leq 1}	\int_{\Sigma_t} e^{- at } & |\partial Z^\alpha v|^2 +	a \int_{0}^1 e^{-a t} \|\partial Z^{\alpha} v\|_{L^2(\Sigma_t)}^2\d t\\& \lesssim \int_{0}^1 e^{- at} \Big( \|Z^{\alpha} G\|_{L^2(\Sigma_t)}^2 + \| [\Box_g, Z^{\alpha}] v\|_{L^2(\Sigma_t)}^2\Big) \d t + 	\int_{\Sigma_0} |\partial Z^\alpha v|^2 .
		\end{align*}
	We now sum over all multi-indices  $|\alpha|\leq k-1$ for $k\geq [\frac{n-1}{2}]+3$, for which we use the notation $Z^{\leq k-1}$. 
	At the top order in derivatives on $v$ the commutator $ [\Box_g, Z^{\alpha}] $ vanishes. Hence, from standard interpolation estimates and using also the definition of $g$ in terms of $Du$ and $D^2u$, we have 
		\begin{multline} \label{eq:commutation-estimate} \| [\Box_g, Z^{\leq k-1}] v\|_{L^2(\Sigma_t)}^2  
			 \lesssim    \|  \partial v\|_{H^{k-1}(\Sigma_t)}^2 + \| G\|^2_{H^{k-1}(\Sigma_t)}\\+
			  \Big(\|Z^{\leq 1}\partial v\|_{L^\infty(\Sigma_t)}^2 +\| G\|^2_{L^\infty(\Sigma_t)} \Big)\|  \partial  D^2 u\|_{H^{k-1}(\Sigma_t)}^2.
		\end{multline} 
Here we used our assumption $u \in H^{[\frac{n-1}{2}]+6}(\Omega) \hookrightarrow C^3(\Omega)$, so that $D^2u \in C^1(\Omega)$ and $(D^2u)^{-1} \in C^1(\Omega)$, and thus $g^{-1}, g \in C^1(\Omega)$. 
The inhomogeneous term $G$ appears from using \eqref{eq:geometric-wave-equation} to control two derivatives normal to $\Sigma_t$   with tangential derivatives and $G$. Note that $\Box_g = -N^2 + \Delta_{\Sigma_t} + P_1$, where $P_1$ is a first order operator.

Similarly, we have 
\begin{align*}
\|  \partial Z^{\leq k-1} v \|_{L^2(\Sigma_0)} \lesssim \|  N_{\mathcal S} v\|_{H^{k-1} (\Sigma_0)} + \| v\|_{H^{k}(\Sigma_0)} \lesssim\|  v_2 \|_{H^{k-1}(\Sigma_0)} + \| v_1 \|_{H^k(\Sigma_0)}. 
\end{align*}

Thus,
\begin{align*} \nonumber 
	\sup_{0\leq t \leq 1}	& e^{- at } \|\partial v\|^2_{H^{k-1}(\Sigma_t)}  + a \int_0^1 e^{-at } \|\partial v\|^2_{H^{k-1}(\Sigma_t)} \d t  \lesssim \int_0^1 e^{-at} \|  G\|_{H^{k-1}(\Sigma_t) }^2  \d t\\
	& + \int_0^1 e^{-a t}   \| \partial v\|_{H^{k-1}(\Sigma_t)}^2  \d t\nonumber  +  \int_0^1 e^{-a t} \Big(\|  \partial v \|^2_{H^{[\frac {n-1}{2}]+2}(\Sigma_t)} + \|G\|_{H^{[\frac n2]+1} (\Sigma_t)}\Big)  \|\partial D^2 u\|_{H^{k-1}(\Sigma_t)}^2  \d t    \\ & +\|  v_2 \|_{H^{k-1}(\Sigma_0)}^2 + \| v_1 \|_{H^k(\Sigma_0)}^2 .
\end{align*}

Now we  note that, for all $a>0$ sufficiently large (also depending on $k$), we can absorb the second term on the right hand side into the left to obtain
\begin{align} \nonumber 
	\sup_{0\leq t \leq 1}	& e^{- at } \|\partial v\|^2_{H^{k-1}(\Sigma_t)}  + a \int_0^1 e^{-at } \|\partial v\|^2_{H^{k-1}(\Sigma_t)} \d t  \lesssim \int_0^1 e^{-at} \|  G\|_{H^{k-1}(\Sigma_t) }^2  \d t\\
	&    +  \int_0^1 e^{-a t} \Big(\|  \partial v \|^2_{H^{[\frac{n-1}{2}]+2}(\Sigma_t)}+ \|G\|_{H^{[\frac n2]+1} (\Sigma_t)}\Big)   \|\partial D^2 u\|_{H^{k-1}(\Sigma_t)}^2  \d t\label{eq:higher-order-energy-estimate}\\
	& + \| v_1 \|_{H^k(\Sigma_0)}^2 +\|  v_2 \|_{H^{k-1}(\Sigma_0)}^2 \nonumber  .
\end{align}

Now, we first use \eqref{eq:higher-order-energy-estimate} with $k=[\frac{n-1}{2}]+3$, and in the right hand side we use the trace estimate
 \begin{align*}
 	\sup_{t\in[0,1]} \|\partial D^2 u\|_{H^{ [\frac{n-1}{2}]+2}(\Sigma_t)}^2 \lesssim \|u \|^2_{H^{[\frac{n-1}{2}] +6}(\Omega)} \lesssim 1
\end{align*}
(recall that $\| u-u_{\mathcal S} \|_{H^{[\frac{n-1}{2}] + 6} (\Omega)}\leq \epsilon_0$).
In this way, choosing $a>0$ sufficiently large, we can absorb the second term on the right hand side into the left one, and we obtain
\begin{align*}
 	\sup_{0\leq t \leq 1}	& \|\partial v\|^2_{H^{[\frac{n-1}{2}]+2}(\Sigma_t)} 
 	 \lesssim   \|G\|_{H^{[\frac n2]+1}(\Omega)}^2 +  \|  v_1\|^2_{H^{[\frac {n-1}{2}]+3}(\Sigma_0)} +\|  v_2 \|_{H^{[\frac{n-1}{2}]+2}(\Sigma_0)}^2 .
\end{align*}
Plugging this back into \eqref{eq:higher-order-energy-estimate} yields
\begin{align*}\nonumber
\int_0^1 \| \partial v\|_{H^{k-1}(\Sigma_t)}^2 \d t\lesssim & \| G\|_{H^{k-1}(\Omega)}^2 \\
&+\Big( \|G\|_{H^{[\frac{n}{2}]+1}(\Omega)}^2 +  \|  v_1\|^2_{H^{[\frac{n-1}{2}]+3}(\Sigma_0)} +\|  v_2 \|_{H^{[\frac{n-1}{2}]+2}(\Sigma_0)}^2 \Big)  \| u\|_{H^{k+2}(\Omega)}^2 \\ & +   \|  v_1\|^2_{H^{k}(\Sigma_0)}  +  \|  v_2\|^2_{H^{k-1}(\Sigma_0)}  
\end{align*}
for all $k\geq [\frac{n-1}{2}]+3$. 

In order to control higher order derivatives transversal to $\Sigma_t$, we use again that $v$ solves   \eqref{eq:geometric-wave-equation}. Then, arguing similarly as before using  interpolation estimates, we obtain
\begin{align*}\nonumber
\|  v\|_{H^{k}(\Omega)}^2 \lesssim & \| G\|_{H^{k-1}(\Omega)}^2 +\Big( \|G\|_{H^{[\frac n2]+1}(\Omega)}^2 +  \|  v_1\|^2_{H^{[\frac{n-1}{2}]+3}(\Sigma_0)} +\|  v_2 \|_{H^{[\frac{n-1}{2}]+2}(\Sigma_0)}^2 \Big)  \| u\|_{H^{k+2}(\Omega)}^2 \\ & +   \|  v_1\|^2_{H^{k}(\Sigma_0)}  +  \|  v_2\|^2_{H^{k-1}(\Sigma_0)}  .
\end{align*}
 
Moreover, for $k\geq  [ \frac{n}{2}]+1$,  
\begin{align*}
	\|G\|_{H^k} = \| f F\|_{H^k} &\lesssim  \| f \|_{H^k} \|F\|_{L^\infty} + \|   F\|_{H^k} \| f\|_{L^\infty}\\ & \lesssim \| u\|_{H^{k+2}} \| F\|_{H^{[\frac n2]+1}} + \| u\|_{H^{[\frac n2]+3}} \| F\|_{H^{k} } \lesssim  \| u\|_{H^{k+2}} \| F\|_{H^{[\frac n2]+1}}  + \| F\|_{H^{k} }.
\end{align*}
Thus,
\begin{align*}\nonumber
	\| v\|_{H^{k}(\Omega)}  \lesssim & \|F \|_{H^{k-1}(\Omega)}   + \Big(\| F\|_{H^{[\frac n2]+1}(\Omega)} +  \|  v_1\|_{H^{[\frac{n-1}{2}]+3}(\Sigma_0)} +\|  v_2 \|_{H^{[\frac{n-1}{2}]+2}(\Sigma_0)}\Big) \| u\|_{H^{k+2}(\Omega)}\\
	& +   \|  v_1\|_{H^{k}(\Sigma_0)}  +  \|  v_2\|_{H^{k-1}(\Sigma_0)}  
\end{align*}
 for all $k\geq [\frac{n-1}{2}]+3$. This proves the result for $n\geq 3$. 
 
With minor modifications, the above proof in fact also works for $n=2$. Indeed, we  recall from \eqref{eq:linearized-equation-n=2} that $L_{u_{\mathcal S}}$ has an additional first order term and instead of $g$ we consider $m$ as the background metric. Moreover, instead of $G:= fF$ we set $G :=\frac{F}{ \Psi(u)}$, which  does not change the structure of $G$. 
The analog of \eqref{eq:geometric-energy-estimate} in the $n=2$ case has an additional term on the right hand side which involves the first order term $\int_{\Omega} b^i\partial_iv X(v)$. Analogous to the other terms in the proof for $n\geq 3$, by choosing $a$ sufficiently large, this term will also be absorbed into the left hand side. The commutation estimate in \eqref{eq:commutation-estimate} also holds for $\Box_g$ replaced with $\Box_m + b^i \partial_i$ and, mutatis mutandis, the rest of the proof holds true also in the case $n=2$. 
\end{proof}
\end{proposition}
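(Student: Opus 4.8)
The plan is to prove the energy estimate of \cref{lem:energy-estimate} via a standard multiplier (vector field) argument for the geometric wave equation $\Box_g v = G$, upgraded to higher order by commuting with a spanning set of vector fields tangential to the foliation $\{\Sigma_t\}$, and then closing the estimate by a bootstrap that separates the ``dangerous'' top-order term (the one hitting the coefficients, i.e.\ $\partial D^2 u$) from the ``safe'' lower-order terms.

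Concretely, first I would record the divergence-theorem identity \eqref{eq:geometric-energy-estimate} for the energy current $J^X[v]=T(X,\cdot)[v]$ associated with the multiplier $X = e^{-at}N$, where $N$ is the unit future timelike normal to $\Sigma_t$ and $a>0$ is a large parameter to be chosen. Since all of $\Sigma_0,\Sigma_1,\Xi_1$ are spacelike and $\Omega$ is compact with $\|u-u_{\mathcal S}\|_{C^2}$ small, the boundary flux terms through $\Sigma_t$ and $\Xi_t$ are coercive, $J^X_\mu[v]N^\mu \sim e^{-at}|\partial v|^2$, so the $\Xi_1$ contribution can be dropped; and the bulk deformation term $K^X = \tfrac12(\mathcal L_X g)^{\mu\nu}T_{\mu\nu}$ splits into a piece bounded by $|\partial v|^2$ coming from $\mathcal L_N g$ and a piece $a\,e^{-at}T(-\nabla^g t,N) \sim a\,e^{-at}|\partial v|^2$, which for $a$ large dominates. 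Combined with Young's inequality on $\int G\,X(v)$, this yields the weighted zeroth-order estimate \eqref{eq:energy-estimate-with-weight}, with the sup in $t$ obtained by stopping the integration at $\Sigma_t$. For higher order, commute: $\Box_g Z^\alpha v = Z^\alpha G - [\Box_g,Z^\alpha]v$, apply \eqref{eq:energy-estimate-with-weight} to $Z^\alpha v$, sum over $|\alpha|\le k-1$, and estimate the commutator. The top-order term in the commutator (all $k-1$ derivatives on $v$) vanishes because $Z$ is tangential and $\Box_g = -N^2 + \Delta_{\Sigma_t} + P_1$; using \eqref{eq:geometric-wave-equation} to trade the two normal derivatives for tangential ones plus $G$, and then interpolation (Gagliardo--Nirenberg), the remaining commutator is bounded by \eqref{eq:commutation-estimate}: a ``good'' part $\|\partial v\|_{H^{k-1}(\Sigma_t)}^2 + \|G\|_{H^{k-1}(\Sigma_t)}^2$ plus the ``dangerous'' part $(\|Z^{\le1}\partial v\|_{L^\infty}^2 + \|G\|_{L^\infty}^2)\|\partial D^2u\|_{H^{k-1}(\Sigma_t)}^2$.

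The estimate then closes in two stages. Stage one: take $k = [\tfrac{n-1}{2}]+3$, so that by Sobolev embedding on $\Sigma_t$ the $L^\infty$ factors are controlled by $\|\partial v\|_{H^{[\frac{n-1}{2}]+2}(\Sigma_t)}$ and $\|G\|_{H^{[\frac n2]+1}(\Sigma_t)}$, while the coefficient factor $\|\partial D^2u\|_{H^{[\frac{n-1}{2}]+2}(\Sigma_t)} \lesssim \|u\|_{H^{[\frac{n-1}{2}]+6}(\Omega)} \lesssim 1$ by the trace theorem and the smallness hypothesis; choosing $a$ large absorbs the resulting term into the left side and gives $\sup_t \|\partial v\|_{H^{[\frac{n-1}{2}]+2}(\Sigma_t)}^2 \lesssim \|G\|_{H^{[\frac n2]+1}(\Omega)}^2 + \|v_1\|_{H^{[\frac{n-1}{2}]+3}(\Sigma_0)}^2 + \|v_2\|_{H^{[\frac{n-1}{2}]+2}(\Sigma_0)}^2$. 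Stage two: feed this bound on the low-norm of $v$ back into \eqref{eq:higher-order-energy-estimate} at general $k$; now the dangerous term is linear in $\|\partial D^2u\|_{H^{k-1}(\Sigma_t)}^2 \lesssim \|u\|_{H^{k+2}(\Omega)}^2$, producing precisely the product structure $(\cdots)\|u\|_{H^{k+2}(\Omega)}$ in the claimed inequality. Finally, convert tangential control to full $H^k(\Omega)$ control by using the equation once more to express normal derivatives, and expand $G = fF$ (resp.\ $G = F/\Psi(u)$ for $n=2$) with the product/Moser estimate $\|fF\|_{H^k}\lesssim \|f\|_{H^k}\|F\|_{L^\infty} + \|f\|_{L^\infty}\|F\|_{H^k}$ together with the fact that $f$ (resp.\ $1/\Psi(u)$) depends smoothly on $Du, D^2u$, so $\|f\|_{H^k}\lesssim \|u\|_{H^{k+2}}$; this yields the stated bound.

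The $n=2$ case is handled by the same scheme after two cosmetic changes: replace $g$ by $m = D^2u_{\mathcal S}$ as background metric and $G = fF$ by $G = F/\Psi(u)$ (same algebraic structure in $u$), and note that \eqref{eq:linearized-equation-n=2} contributes an extra first-order term $\int_\Omega b^i\partial_i v\, X(v)$ to the energy identity, which by Young's inequality and $a$ large is absorbed like the other error terms; the commutator estimate \eqref{eq:commutation-estimate} is unchanged for $\Box_m + b^i\partial_i$. The main obstacle — and the point the argument is really designed around — is the derivative loss in the coefficient term $\|\partial D^2u\|_{H^{k-1}}$: naively this costs $k+2$ derivatives of $u$ against only $k$ of $v$, which is fatal for a direct fixed-point argument; the resolution is precisely the two-stage bootstrap above, first securing a low-regularity norm of $v$ uniformly (using smallness of $u$ in a fixed norm to absorb), then running the high-order estimate with the coefficient term appearing only \emph{linearly}. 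This tame (Nash--Moser-type) structure of the estimate — loss of a fixed finite number of derivatives, linear in the high norm of $u$ — is exactly what the iteration in the proof of \cref{thm:local-existence} requires.
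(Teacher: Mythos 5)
Your proposal reproduces the paper's argument step by step: the multiplier $X=e^{-at}N$, dropping the coercive $\Xi_1$ flux, the two-piece split of $K^X$ with the $a$-weighted dominant term, the weighted zeroth-order estimate \eqref{eq:energy-estimate-with-weight}, commutation with tangential $Z^\alpha$, the commutator estimate \eqref{eq:commutation-estimate}, the two-stage bootstrap starting at $k=[\frac{n-1}{2}]+3$ with the trace estimate on $\partial D^2 u$, conversion to full $H^k(\Omega)$ via the equation, the Moser product bound for $G=fF$, and the $n=2$ modifications. This is the same proof; no substantive deviation to report.
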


\subsection{Nash--Moser iteration and the proof of \texorpdfstring{\cref{thm:local-existence}}{Theorem 1}}
With the energy estimate from \cref{lem:energy-estimate} at hand, we will now proceed to the proof of \cref{thm:local-existence}. We note that our estimate in \cref{lem:energy-estimate} suffers from a ``derivative loss'', as we can only control $v$ in $H^k$ using $u$ in $H^{k+2}$. 
 The standard inverse function theorem fails to be applicable in such a scenario; luckily, the more refined Nash--Moser theorem was designed to deal with such a derivative loss. For our purpose, we will use the following version which applies in the smooth category. We refer the reader to \cite{MR656198} for the definitions of the terms involved.
\begin{theorem}[Nash--Moser theorem {\cite[Theorem 1.1.1]{MR656198}}]\label{thm:nash-moser}
Let $\mathcal F$ and $\mathcal G$ be tame spaces and $\Phi\colon U\subset \mathcal F \to \mathcal G$ a smooth tame map. Suppose that the equation $D\Phi(f) h=k$ has a unique solution $h=(D\Phi)^{-1}(f)k$ for all $f \in U$ and all $k\in \mathcal G$, and that the family of inverses $(D\Phi)^{-1}\colon U\times \mathcal G \to \mathcal F$ is a smooth tame map. Then $\Phi$ is locally invertible and each local inverse $\Phi^{-1}$ is a smooth tame map. 
\end{theorem}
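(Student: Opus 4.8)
The plan is to prove the Nash--Moser inverse function theorem by a Newton-type iteration regularized by smoothing operators, following the scheme of \cite{MR656198}. First I would unpack the structure that the tameness hypotheses provide: graded families of seminorms $\|\cdot\|_n$ on $\mathcal F$ and $\mathcal G$; families of smoothing operators $S(t)\colon \mathcal F\to\mathcal F$ (and likewise on $\mathcal G$) for $t\geq 1$ satisfying the gain estimate $\|S(t)f\|_n\lesssim t^{(n-m)_+}\|f\|_m$, the approximation estimate $\|(\mathrm{Id}-S(t))f\|_n\lesssim t^{-(m-n)}\|f\|_m$ for $n\leq m$, and the logarithmic-convexity (interpolation) inequalities $\|f\|_{\theta n+(1-\theta)m}\lesssim\|f\|_n^\theta\|f\|_m^{1-\theta}$; and tame estimates for $\Phi$, $D\Phi$ and the given inverse $(D\Phi)^{-1}$, each with a fixed loss of $r$ derivatives above a fixed base degree. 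After translating, it suffices to fix $f_\star\in U$, set $g_\star=\Phi(f_\star)$, and show that every $g$ with $\|g-g_\star\|_{n_1}$ small (for a suitably large index $n_1$) equals $\Phi(f)$ for some $f\in U$ close to $f_\star$ in a range of norms.

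Second, I would set up the iteration. Put $f_0=f_\star$ and, given $f_k$, let $e_k:=g-\Phi(f_k)$, $h_k:=(D\Phi)^{-1}(f_k)\,e_k$, and $f_{k+1}:=f_k+S(t_k)h_k$, where $t_k=t_0^{\sigma^k}$ for a suitable $\sigma\in(1,2)$ and $t_0$ large. Using the defining identity $D\Phi(f_k)h_k=e_k$ together with the second-order Taylor expansion of $\Phi$ at $f_k$, one obtains the error recursion
\[
e_{k+1} = D\Phi(f_k)\bigl(\mathrm{Id}-S(t_k)\bigr)h_k - Q_k, \qquad Q_k:=\Phi(f_{k+1})-\Phi(f_k)-D\Phi(f_k)(f_{k+1}-f_k).
\]
Both terms on the right are ``quadratically small'': $Q_k$ is controlled by $\|f_{k+1}-f_k\|^2$ in low norm via the tame estimate on $D^2\Phi$, while $(\mathrm{Id}-S(t_k))$ trades any high-norm growth of $h_k$ for a negative power of $t_k$, and this gain is what compensates the fixed derivative loss $r$.

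Third --- and this is where the real work lies --- I would run the bootstrap. One propagates simultaneously a decaying low-norm bound $\|e_k\|_{n_0}\lesssim\delta\,t_k^{-\beta}$ and a merely polynomially growing high-norm bound $\|f_k-f_\star\|_{n_1}\lesssim\delta\,t_k^{\gamma}$ (together with a low-norm bound on $\|f_k-f_\star\|_{n_0}$ keeping one inside $U$). Feeding these into the recursion for $e_{k+1}$, estimating $h_k$ through the tame bound on $(D\Phi)^{-1}$, and applying the smoothing and interpolation inequalities, one must verify that the same inductive inequalities reappear with the same constant $\delta$ provided $t_0$ is large enough --- equivalently, $\|g-g_\star\|_{n_1}$ small enough. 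The delicate point is the \emph{joint} closure of these inequalities: the admissible window for $\sigma$ and for the gap $n_1-n_0$ relative to $r$ and the base degree is exactly what makes Nash--Moser succeed where the ordinary inverse function theorem fails, and checking it amounts to careful bookkeeping with the smoothing and interpolation estimates. Once it closes, interpolation yields $\sum_k\|f_{k+1}-f_k\|_n<\infty$ for every intermediate $n$, so $f_k\to f$ in $\mathcal F$; the low-norm decay of $e_k$ gives $\Phi(f)=g$, and the low-norm bound keeps $f\in U$.

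Finally, I would upgrade to the assertions about $\Phi^{-1}$. Tracking the constants through the iteration shows that the solution map $g\mapsto f=:\Phi^{-1}(g)$ obeys tame estimates in all relevant norms, hence is a tame map, and that it is continuous (apply the same estimates to differences of two data). For differentiability, one guesses $D(\Phi^{-1})(g)=(D\Phi)^{-1}(\Phi^{-1}(g))$ by formally differentiating $\Phi(\Phi^{-1}(g))=g$, then justifies it by applying the regularized-Newton estimate to difference quotients, obtaining that $\Phi^{-1}$ is $C^1$ and tame. Since $(D\Phi)^{-1}$ is smooth tame by hypothesis and since compositions and products of smooth tame maps are smooth tame, an induction on the order of differentiation then promotes $\Phi^{-1}$ to a smooth tame map, which is the claim.
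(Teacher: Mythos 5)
The paper does not prove this statement: it is quoted verbatim from Hamilton's survey (Theorem~1.1.1 of \cite{MR656198}) and used as a black box, so there is no in-paper proof to compare yours against. Judged on its own terms, your outline is the standard (and correct) strategy behind Hamilton's theorem: smoothing operators with gain/approximation/interpolation estimates, the regularized Newton step $f_{k+1}=f_k+S(t_k)(D\Phi)^{-1}(f_k)(g-\Phi(f_k))$, the error identity $e_{k+1}=D\Phi(f_k)(\mathrm{Id}-S(t_k))h_k-Q_k$ (which you state correctly), the two-tier bootstrap pairing low-norm decay with controlled high-norm growth, and the upgrade of $\Phi^{-1}$ to a smooth tame map by differentiating $\Phi(\Phi^{-1}(g))=g$ and invoking closure of smooth tame maps under composition. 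Be aware, though, that what you have written is a plan rather than a proof: the entire content of the theorem lives in the step you defer as ``careful bookkeeping,'' namely verifying that the inductive inequalities close for some admissible choice of $\sigma$, $\beta$, $\gamma$, and the gap $n_1-n_0$ relative to the loss $r$ --- this is precisely where a naive scheme fails and where the hypotheses on tameness of the \emph{family} of inverses (not just invertibility at a single point) are actually consumed. Hamilton's own proof is also organized differently: he first reduces to the model case where $\mathcal F$ and $\mathcal G$ are tame direct summands of spaces $\Sigma(B)$ of exponentially decreasing sequences, which simplifies the smoothing-operator bookkeeping considerably; your direct iteration is closer to the classical Moser/H\"ormander presentations. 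Either route is legitimate, but for the purposes of this paper the theorem should simply be cited, as the authors do.
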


This allows us now to prove \cref{thm:local-existence}.

\begin{proof}[Proof of \cref{thm:local-existence}]
We first set $k_0=[\frac{n-1}{2}]+6$. As our tame spaces we  choose 
\begin{align*}&\mathcal F = C^\infty(\Omega) = \cap_{k\geq k_0} H^{k_0}(\Omega)\\
&\mathcal G=C^\infty(\Omega)\times C^\infty(\Sigma_0)\times C^\infty(\Sigma_0) = \cap_{k\geq k_0} H^{k_0}(\Omega) \times  \cap_{k\geq k_0} H^{k_0-1}(\Sigma_0) \times  \cap_{k\geq k_0} H^{k_0-2}(\Sigma_0)
\end{align*}   which are all standard tame spaces, see e.g.\ \cite[p.135]{MR656198}. As our map we choose the nonlinear operator 
\begin{align*}
\Phi \colon U \subset \mathcal F \to \mathcal G, \qquad u \mapsto \Phi(u) := \left( \frac{\det(D^2 u )}{(1+|Du|^2)^{\frac{n+2}{2}}} - K_{\mathcal S}, u\vert_{\Sigma_0}, N_{\mathcal S}  u\vert_{\Sigma_0} \right),
\end{align*}  
where  $U = \{ u \in F \colon \| u-u_{\mathcal S}\|_{H^{k_0}(\Omega)}\}\leq \epsilon_0$. 
In the first component $\Phi$ is a nonlinear differential operator of order $2$, while in the second and third component it is a linear restriction operator that loses one derivative. Thus, $\Phi$ is a tame operator of order $2$, see \cite[Corollary~2.2.7]{MR656198}.

As per usual, the most difficult ingredient to check in using \cref{thm:nash-moser} is  that $(D\Phi)^{-1}$ is a tame map. This is the content of \cref{lem:energy-estimate}. Moreover, with the aid of \cref{lem:energy-estimate} it is easy to check that $(D\Phi)^{-1}\colon U \times \mathcal G \to \mathcal F$ is continuous and hence smooth \cite[Theorem 5.3.1]{MR656198}. Thus, using \cref{thm:nash-moser}, we deduce that $\Phi$ is  smoothly invertible around $u_{\mathcal S}$. 
In particular, for the element $( \eta, u_{\mathcal S}\vert_{\Sigma_0},N_{\mathcal S} u_{\mathcal S}\vert_{\Sigma_0})\in \mathcal G$ with $\eta$ belonging to a sufficiently small neighborhood of zero, there exists $u_\eta$ such that $\Phi(u_\eta) =\eta$ and satisfying $u_\eta=u_{\mathcal S}$ and $N_{\mathcal S} u_\eta= N_{\mathcal S}u_{\mathcal S}$ on $\Sigma_0$. In particular, this implies that $Du_\eta = Du_{\mathcal S}$ on $\Sigma_0$, concluding the proof.
\end{proof}
\section{Beyond the local analysis}
\subsection{A criterion for fully localized solutions}

In \cref{thm:local-existence}  we showed that, for all sufficiently small  perturbations $\eta\in C^\infty(\Omega)$, we can solve the prescribed Gauss curvature equation \eqref{eq:prescribed-curvature-equation} and obtain a perturbation $u$ of the original surface $u_{\mathcal S}$ that agrees with $u_{\mathcal S}$ on the initial slice $\Sigma_0$. A natural question is whether, for  perturbations that vanish of the boundary of $\Omega$ (i.e.,   $\eta \in C_c^\infty(\operatorname{int}(\Omega))$), one can find perturbations $u$ of $u_{\mathcal S}$ that  agree with $u_{\mathcal S}$ on the whole boundary of $\Omega$. 

An example of such perturbations can be constructed ``tautologically'' by considering a family of graphs $u$ that are close to $u_{\mathcal S}$ and only differ from $u_{\mathcal S}$ outside a set compactly supported inside $\operatorname{int}(\Omega)$, and then defining $\eta:=\frac{\det(D^2 u)}{(1+|Du|^2)^{\frac{n+2}{2}}} - K_{\mathcal S} $.

 In the following, given a smooth one parameter family of perturbations of the curvature $\eta_s$, we will derive a necessary condition for this family to give rise to a smooth one parameter family of solutions $u_s$ with the property that $u_s$ agrees with $u_{\mathcal S}$ outside a compact set  contained in $\Omega$. Unlike in \cref{thm:local-existence} we shall merely assume that $\Omega\subset \mathbb R^n$ is a smooth (possibly unbounded) domain.   Moreover, for the linearized equation, we  show that this condition (together with global hyperbolicity of $\Omega$, so that the linearized equation is well-posed) is in fact sufficient to construct a solution $v$ of the linearized equation with the property that $v$ is supported in a compact set contained inside $\Omega$. We will state the following proposition only for $n\geq 3,$ but we note that for $n=2$ an analogous statement can be shown. 

\begin{proposition}\label{prop:full-localization}
	Let $n\geq 3$ and let $\Omega \subset \mathbb R^n$ be a (possibly unbounded) open domain.  Let $\eta_s\in C^\infty(\Omega)$ be a smooth 1-parameter family of perturbations parameterized by $s\in [0,1]$ such that $\cup_{s\in [0,1]}\operatorname{supp}(\eta_s) \subset \Omega $ and $\eta_0 =0$. Assume that there exist associated solutions $u_{\eta_s}$ to \eqref{eq:prescribed-curvature-equation}  which are fully localized (in the sense that  $\operatorname{supp}(u_{\eta_s}-u_{\mathcal S}) \subset \Omega $) for all $s\in [0,1]$. Then, for all $s\in [0,1]$,
	\begin{align}
		\int_{\Omega} w  f_s  \dot \eta_s  \,\, \textup{vol}_{g_{u_{\eta_s}}} =0\qquad \text{for all $w \in C^\infty(\Omega)$ satisfying $\Box_{g_{u_{\eta_s}}} w=0$,} \label{eq:orthonality-condition-1}
	\end{align}
where
	$$
	f_s= -  | \det D^2u_{\eta_s}|^{- \frac{n-1}{n-2}} ( 1+ |Du_{\eta_s}|^2)^{ \frac{n(n+2)}{2(n-2)}}\qquad \text{and}\qquad \dot \eta_s = \frac{\d }{\d s } \eta_s
	$$
	(cp. \eqref{eq:definition-of-f}).	 In other words, $f_s \dot \eta_s$ is orthogonal to the kernel of $\Box_{g_{u_{\eta_s}}}$ for each $s\in [0,1]$. 
	
	Conversely, assume in addition that $\Omega$ is globally hyperbolic.
	If $\tilde \eta\in C^\infty_c(\Omega)$ satisfies \begin{align}
		\int_{\Omega}  w  f  \tilde \eta   \,\, \textup{vol}_{g_{u_{\mathcal S}}} =0
		\qquad \text{for  all $w \in C^\infty(\Omega)$ satisfying $\Box_{g_{u_{\mathcal S}}} w=0$}
		 \label{eq:orthonality-condition-2}
	\end{align} with $f$ as in \eqref{eq:definition-of-f}, then there exists a solution $v\in C^\infty(\Omega)$ to the linearized Gauss curvature equation 
	\begin{align}\label{eq:linearized-equation-tilde-eta}
		L_{u_{\mathcal S}} v = \tilde \eta
	\end{align}
	with the property that $v$ is fully localized, in the sense that $\operatorname{supp}(v)\subset   \Omega $. 
\end{proposition}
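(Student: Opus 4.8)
The proposition has two halves: a necessity statement (integral orthogonality holds along any localized family) and a sufficiency statement (orthogonality plus global hyperbolicity yields a compactly supported solution of the linearized equation). I will treat them separately.

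*Necessity.* The idea is to differentiate the nonlinear equation \eqref{eq:prescribed-curvature-equation} in the parameter $s$. Writing $v_s := \frac{\d}{\d s} u_{\eta_s}$, and using that $\Psi(u_{\eta_s}) = K_{\mathcal S} + \eta_s$, the chain rule together with the computation of the linearized operator in \eqref{eq:linearized-equation} gives $L_{u_{\eta_s}} v_s = \dot\eta_s$. By \cref{prop:equation-as-geometric-wave-equation} applied around the (non-flat) solution $u_{\eta_s}$ — the proof there only used that the Hessian is Lorentzian, so it applies verbatim with $u_{\mathcal S}$ replaced by $u_{\eta_s}$ — this is equivalent to $\Box_{g_{u_{\eta_s}}} v_s = f_s \dot\eta_s$. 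Now I pair with an arbitrary $w \in C^\infty(\Omega)$ solving $\Box_{g_{u_{\eta_s}}} w = 0$ and integrate against $\textup{vol}_{g_{u_{\eta_s}}}$: since $\Box_g$ is formally self-adjoint with respect to its Riemannian volume form, $\int_\Omega w\, \Box_g v_s \,\textup{vol}_g = \int_\Omega (\Box_g w)\, v_s\, \textup{vol}_g$ plus boundary terms. The key point is that $v_s$ is compactly supported inside $\Omega$ (this is exactly the "fully localized" hypothesis, since $u_{\eta_s} - u_{\mathcal S}$ has support in a fixed compact subset of $\Omega$, hence so does its $s$-derivative), so all boundary terms vanish and we are left with $\int_\Omega w\, f_s \dot\eta_s\, \textup{vol}_{g_{u_{\eta_s}}} = 0$, which is \eqref{eq:orthonality-condition-1}. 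I should take a little care that the Stokes/integration-by-parts argument is valid on the (possibly unbounded) domain $\Omega$; this is fine because the integrand is compactly supported.

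*Sufficiency.* Here I want to produce a compactly supported $v$ with $L_{u_{\mathcal S}} v = \tilde\eta$, equivalently $\Box_{g_{u_{\mathcal S}}} v = f \tilde\eta =: \tilde F \in C^\infty_c(\Omega)$. Because $\Omega$ is globally hyperbolic, pick a Cauchy hypersurface $\Sigma$ that lies strictly to the past of $\operatorname{supp}(\tilde F)$ (possible since $\operatorname{supp}\tilde F$ is compact); let $v$ be the unique solution of $\Box_g v = \tilde F$ with trivial Cauchy data on $\Sigma$ — this is the advanced/retarded fundamental solution picture. By finite speed of propagation and global hyperbolicity, $\operatorname{supp}(v) \subset J^+(\operatorname{supp}\tilde F)$, the causal future, which is closed but a priori need not be contained in a compact subset of $\Omega$. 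The role of the orthogonality hypothesis \eqref{eq:orthonality-condition-2} is precisely to fix this: it forces $v$ to also vanish to the future of $\operatorname{supp}\tilde F$. Concretely, pick a Cauchy hypersurface $\Sigma'$ strictly to the future of $\operatorname{supp}\tilde F$; I claim $v$ has vanishing Cauchy data on $\Sigma'$. For any solution $w$ of $\Box_g w = 0$, Green's identity on the globally hyperbolic slab between $\Sigma$ and $\Sigma'$ (where $v$ already has zero data on $\Sigma$) expresses $\int w\, \tilde F\, \textup{vol}_g$ as a boundary integral over $\Sigma'$ of the data of $v$ against the data of $w$; since $w$ ranges over \emph{all} homogeneous solutions, its Cauchy data on $\Sigma'$ ranges over a dense (in fact, by solving the homogeneous Cauchy problem backwards, essentially all) set of pairs, so the hypothesis $\int w\, f\tilde\eta\,\textup{vol}_g = 0$ forces the Cauchy data of $v$ on $\Sigma'$ to vanish. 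Uniqueness for the Cauchy problem to the future of $\Sigma'$ then gives $v \equiv 0$ there, so $\operatorname{supp}(v) \subset J^+(\operatorname{supp}\tilde F) \cap J^-(\Sigma')$, which is a compact subset of $\Omega$ by global hyperbolicity. Hence $v$ is fully localized.

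*Main obstacle.* The delicate point is the last step of the sufficiency argument: turning the scalar orthogonality condition \eqref{eq:orthonality-condition-2} into the statement that the Cauchy data of $v$ on $\Sigma'$ vanish. This requires knowing that the Cauchy data of homogeneous solutions $w$ on $\Sigma'$ fill out enough of $C^\infty(\Sigma') \times C^\infty(\Sigma')$ to conclude — which follows from well-posedness of the homogeneous Cauchy problem (prescribe arbitrary data on $\Sigma'$, solve, get a global homogeneous solution on the globally hyperbolic $\Omega$) — together with a correct bookkeeping of the boundary term in Green's identity, i.e.\ identifying which bilinear pairing of the data of $v$ and $w$ appears and checking it is nondegenerate. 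I would also need to be slightly careful that $\Sigma'$ can be chosen genuinely Cauchy (not merely spacelike) to the future of the compact set $\operatorname{supp}\tilde F$; this is standard for globally hyperbolic $\Omega$ using a Cauchy temporal function. The rest — the linearization identity, formal self-adjointness of $\Box_g$, finite speed of propagation — is routine.
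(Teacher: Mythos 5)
Your necessity argument is the same as the paper's: differentiate the nonlinear equation in $s$ to get $L_{u_{\eta_s}}v_s=\dot\eta_s$, convert to $\Box_{g_{u_{\eta_s}}}v_s=f_s\dot\eta_s$ via Proposition~\ref{prop:equation-as-geometric-wave-equation}, and integrate against $w$ using formal self-adjointness of $\Box_g$ and the compact support of $v_s$ to kill all boundary terms. For the sufficiency direction, however, you take a genuinely different route. The paper fixes an arbitrary test function $\varphi$ supported in $J^+(\operatorname{supp}\tilde\eta)\setminus J^-(\operatorname{supp}\tilde\eta)$, solves $\Box_g\tilde w=\varphi$ backwards from a Cauchy slice $\Sigma_+$ to the future of $\operatorname{supp}\varphi$, takes $w$ the homogeneous solution matching $\tilde w$'s Cauchy data on $\Sigma_-$, and observes that $w=\tilde w$ on $\operatorname{supp}\tilde\eta$ (since $\operatorname{supp}\tilde\eta$ avoids $J^+(\operatorname{supp}\varphi)$); the orthogonality then gives $\int\varphi\,v\,\mathrm{dvol}_g=0$ directly, hence $v=0$ on that set and $\operatorname{supp}(v)\subset J^+(\operatorname{supp}\tilde\eta)\cap J^-(\operatorname{supp}\tilde\eta)$. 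You instead apply Green's identity on the slab between $\Sigma$ and a Cauchy slice $\Sigma'$ to the future of $\operatorname{supp}\tilde\eta$, use surjectivity of the homogeneous Cauchy problem on $\Sigma'$ plus nondegeneracy of the boundary pairing to conclude that the Cauchy data of $v$ vanish on $\Sigma'$, and then invoke uniqueness to deduce $v\equiv 0$ to the future of $\Sigma'$; compactness of $\operatorname{supp}(v)$ follows since $J^+(\operatorname{supp}\tilde\eta)\cap J^-(\Sigma')$ is compact in a globally hyperbolic manifold. Both arguments are correct duality arguments relying on finite speed of propagation and well-posedness of the Cauchy problem; yours is closer to the textbook ``symplectic pairing of Cauchy data'' formalism and is arguably more self-contained, while the paper's avoids any surjectivity claim about Cauchy data and yields the sharper localization $\operatorname{supp}(v)\subset J^+(\operatorname{supp}\tilde\eta)\cap J^-(\operatorname{supp}\tilde\eta)$ (your set $J^+(\operatorname{supp}\tilde\eta)\cap J^-(\Sigma')$ is larger but still compact, which is all the proposition requires). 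One small point you flagged as an ``obstacle'' — that the pairing appearing in Green's identity must be checked to be nondegenerate — is indeed just the standard fact that $(w,\partial_\nu w)\mapsto\int_{\Sigma'}(w\,\partial_\nu v - v\,\partial_\nu w)$ separates $(v,\partial_\nu v)$ when $(w,\partial_\nu w)$ ranges over all smooth compactly supported pairs; stating it explicitly as you plan to would close the argument.
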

\begin{proof}
	For the first claim we differentiate \eqref{eq:prescribed-curvature-equation} with respect to $s$ so that $v_s:= \frac{\d}{\d s} u_{\eta_s}$ satisfies the linearized equation   \eqref{eq:linearized-equation} with $\dot \eta_s$ as right hand side, that is
	\begin{align*}
\frac{1}{f_s} \Box_{g_{u_{\eta_s}}} v_s = \dot\eta_s
	\end{align*}
(see \cref{prop:equation-as-geometric-wave-equation}), or equivalently
$\Box_{g_{u_{\eta_s}}} v = f_s \dot \eta_s$. To obtain \eqref{eq:orthonality-condition-1}, we multiply this equation by $w$ and integrate. Using that $\Box_{g_{u_{\eta_s}}} $ is a symmetric operator with respect to the volume form induced by $g_{u_{\eta_s}}$, and using that $\Box_{g_{u_{\eta_s}}} w =0$, we obtain \eqref{eq:orthonality-condition-1}. Note that we also used that the support of $v$ is contained in $\Omega $ to ensure that no boundary terms appear in the integration by parts.
	
	For the second claim, let  $\tilde \eta$ satisfy \eqref{eq:orthonality-condition-2}, and let $v\in C^\infty(\Omega)$ be the unique solution to \eqref{eq:linearized-equation-tilde-eta} arising from vanishing data on a Cauchy hypersurface $\Sigma_-$ that lies to the past of the support of $\tilde \eta$. We note that such a hypersurface always exists as $\Omega$ is globally hyperbolic and $\tilde \eta$ is compactly supported.  By the domain of dependence, we remark that $\operatorname{supp}(v) \subset J^+(\operatorname{supp}(\tilde \eta))$, where $J^+(X)$ (resp. $J^-(X)$) denotes the causal future (resp. causal past) of the set $X$ with respect to the Lorentzian metric $g$.\footnote{We recall that $p\in J^\pm(X)$ if either $p\in X$ or if there exists a future/past directed causal curve starting in $X$ and terminating at $p$. Similarly, we recall that $p \in I^\pm(X)$  if there exists a future/past directed timelike curve starting in $X$ and terminating at $p$.} 

	 \begin{figure}
		\centering
\begingroup%
  \makeatletter%
  \providecommand\color[2][]{%
    \errmessage{(Inkscape) Color is used for the text in Inkscape, but the package 'color.sty' is not loaded}%
    \renewcommand\color[2][]{}%
  }%
  \providecommand\transparent[1]{%
    \errmessage{(Inkscape) Transparency is used (non-zero) for the text in Inkscape, but the package 'transparent.sty' is not loaded}%
    \renewcommand\transparent[1]{}%
  }%
  \providecommand\rotatebox[2]{#2}%
  \newcommand*\fsize{\dimexpr\f@size pt\relax}%
  \newcommand*\lineheight[1]{\fontsize{\fsize}{#1\fsize}\selectfont}%
  \ifx\svgwidth\undefined%
    \setlength{\unitlength}{289.63814503bp}%
    \ifx\svgscale\undefined%
      \relax%
    \else%
      \setlength{\unitlength}{\unitlength * \real{\svgscale}}%
    \fi%
  \else%
    \setlength{\unitlength}{\svgwidth}%
  \fi%
  \global\let\svgwidth\undefined%
  \global\let\svgscale\undefined%
  \makeatother%
  \begin{picture}(1,0.27308764)%
    \lineheight{1}%
    \setlength\tabcolsep{0pt}%
    \put(0,0){\includegraphics[width=\unitlength,page=1]{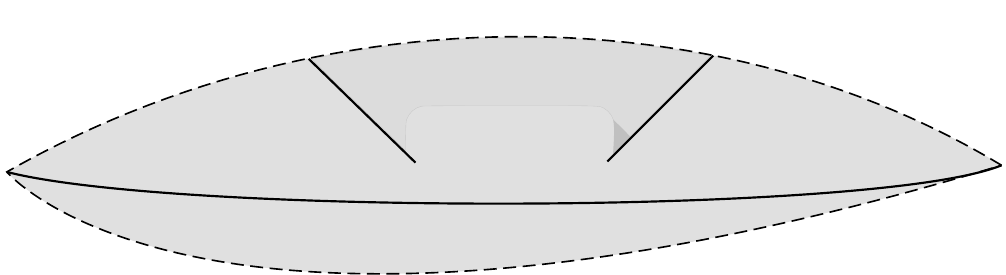}}%
    \put(0.17739994,0.04614872){\color[rgb]{0,0,0}\makebox(0,0)[lt]{\lineheight{1.25}\smash{\begin{tabular}[t]{l}$\Sigma_-$\end{tabular}}}}%
    \put(0,0){\includegraphics[width=\unitlength,page=2]{prop4-illustration.pdf}}%
    \put(0.27879234,0.26829793){\color[rgb]{0,0,0}\makebox(0,0)[lt]{\lineheight{1.25}\smash{\begin{tabular}[t]{l}$\mathrm{supp}(\varphi)$\end{tabular}}}}%
    \put(0,0){\includegraphics[width=\unitlength,page=3]{prop4-illustration.pdf}}%
    \put(0.76733529,0.14540355){\color[rgb]{0,0,0}\makebox(0,0)[lt]{\lineheight{1.25}\smash{\begin{tabular}[t]{l}$\Sigma_+$\end{tabular}}}}%
    \put(0,0){\includegraphics[width=\unitlength,page=4]{prop4-illustration.pdf}}%
    \put(0.43672621,0.13157241){\color[rgb]{0,0,0}\makebox(0,0)[lt]{\lineheight{1.25}\smash{\begin{tabular}[t]{l}$\mathrm{supp}(\tilde \eta)$\end{tabular}}}}%
    \put(0,0){\includegraphics[width=\unitlength,page=5]{prop4-illustration.pdf}}%
  \end{picture}%
\endgroup%

		\caption{Illustration of some of the regions involved in the proof of \cref{prop:full-localization}. The darker shaded regions represent  $J^+(\operatorname{supp}(\tilde \eta ))$ and $J^-(\operatorname{supp}(\tilde \eta ))$, respectively.}
		\label{fig:prop4-illustration}
	\end{figure}

	  To show that $\operatorname{supp}(v) \subset J^-(\operatorname{supp}(\tilde \eta))$, let $\varphi$ be an arbitrary smooth test function supported inside $J^+(\operatorname{supp}(\tilde \eta)) \setminus J^-(\operatorname{supp}(\tilde \eta))$, fix $\Sigma_+$ a Cauchy hypersurface to the future of  the support of $\varphi$ (cf.\ \cref{fig:prop4-illustration}), and consider $\tilde w\in C^\infty(\Omega)$ the unique solution to  \begin{align*}
		\Box_g \tilde w = \varphi 
	\end{align*}
	with trivial data on $\Sigma_+$. Then $\operatorname{supp}(\tilde w) \subset J^-(\operatorname{supp}(\varphi) )$. 
	Moreover, let $  w \in C^\infty(\Omega)$ be the solution to
	$$
	\Box_g   w = 0 ,\qquad  \text{with }  w = \tilde w, N_{\Sigma_-}w =  N_{\Sigma_-} \tilde w \text{ on the Cauchy hypersurface $\Sigma_-$.}
	$$
	In this way, by uniqueness for the wave equation, $w$ actually satisfies
	\begin{align*}
		\Box_g   w = 0,\qquad  \text{with }  w = \tilde w \text{ for }  \Omega \setminus J^+(\operatorname{supp}(\varphi)).
	\end{align*}
With these definitions, integrating by parts we obtain
	\begin{align}
		\int \varphi v  \operatorname{dvol}_{g} = \int (\Box_g \tilde w  )v \operatorname{dvol}_{g} = \int \tilde w  (\Box_g v) \operatorname{dvol}_{g} = \int \tilde w f  \tilde {\eta}  \operatorname{dvol}_{g}.
		\label{eq:int parts}
	\end{align}
	Now, we note that if $p \in \operatorname{supp}(\tilde \eta)\subset J^-(\operatorname{supp}(\tilde \eta))$, then $p \notin \operatorname{supp}(\varphi) \subset J^+(\operatorname{supp}(\varphi))$.  Thus, $w = \tilde w $ on the support of $\tilde \eta$. Hence, by \eqref{eq:int parts} and \eqref{eq:orthonality-condition-2}
	we get
	\begin{align*}
		\int \varphi v  \operatorname{dvol}_g   = \int    w f \tilde{\eta}  \operatorname{dvol}_g =0.
	\end{align*}
This shows that $\operatorname{supp}(v)\subset J^+(\operatorname{supp}(\tilde \eta)) \cap J^-(\operatorname{supp}(\tilde \eta))$. Finally, to see that $J^+(\operatorname{supp}(\tilde \eta)) \cap J^-(\operatorname{supp}(\tilde \eta))$  is a compact subset of $\Omega$, we cover the compact set  $\operatorname{supp}(\tilde \eta) $ with finitely many open sets of the form $I^+(p_i)\cap I^-(q_i) $ with $p_i,q_i\in \Omega $, $1\leq i \leq M$. In particular, it follows that $ J^+(\operatorname{supp}(\tilde \eta)) \cap J^-(\operatorname{supp}(\tilde \eta))\subset \bigcup_{1\leq i,j\leq M} J^+(p_i)\cap J^-(q_j)$. As $\Omega$ is globally hyperbolic, we have that the sets $J^+(p_i)\cap J^-(q_j)$ are compact, from which the claim follows.
\end{proof}

\subsection{A linear instability result}
In the previous section, we found  a natural necessary condition for the existence of localized solutions to the  linearized Gauss curvature equation (cf.~\eqref{eq:orthonality-condition-1}), that holds in particular for $\Omega=\mathbb R^n$. 
In particular, this shows that for generic arbitrarily small and localized perturbation of the curvature, one cannot expect the perturbation of the surface to be compactly supported unless \eqref{eq:orthonality-condition-1} is satisfied.

Still, given arbitrarily small and localized perturbations of the curvature, one may wonder whether there exist  solutions that remain close (in a suitable norm) to the original solution $u_{\mathcal S}$ at infinity, even if \eqref{eq:orthonality-condition-1} fails. At the linear level, this would correspond to understanding whether the   graph $u_{\mathcal S}$ is \emph{linearly stable}, in the sense that solutions to the linearized equations arising from smooth and localized initial data are asymptotically bounded, or at least grow slower than $u_{\mathcal S}.$
As we shall show now, linear stability may fail in general.

Consider the hyperbolic paraboloid $\mathcal S\subset \mathbb R^{n+1}$ given as the graph of the function
 \begin{align*}  u_{\mathcal S}(t,x)  = \frac 12 ( |x|^2 - t^2 ),\end{align*} where $x = (x_1,\dots, x_{n-1})  \in \mathbb R^{n-1}$ and $t \in \mathbb R$. Its Hessian is given by
\begin{align*}
	D^2 u_{\mathcal S} = \textup{diag} ( -1, 1 , \dots , 1),
\end{align*}
thus $\det D^2 u_{\mathcal S} = - 1 $, and the Gauss curvature of $\mathcal S$ is
\begin{align*}
	K_{\mathcal S} (t,x) = \frac{\det D^2 u_{\mathcal S} }{(1 + |D u_{\mathcal S} |^2 )^{\frac{n+2}{2} }} = \frac{-1}{ ( 1 + |x|^2 + t^2)^{\frac{n+2}{2} }}.
\end{align*}
In particular, the linearized operator reads
\begin{align}\label{eq:linearized-operator-hyperbolic-paraboloid}
K_{\mathcal S}^{-1} L_{u_{\mathcal S}}(v):= 	-\partial_{t}^2v + \Delta_x v- (n+2) \frac{-t \partial_t v + x\cdot \nabla_x v}{1+ t^2 + |x|^2 } 
\end{align}

For simplicity we focus on the case $n=2$ and show that the  operator \eqref{eq:linearized-operator-hyperbolic-paraboloid} is  linearly unstable. We note that, for higher dimensions $n\geq 3$, one could also show similar instability results. 
\begin{proposition} Let $n=2.$
Then,  the linearized prescribed Gauss curvature problem is linearly unstable around the hyperbolic paraboloid. More precisely, there exists smooth and compactly supported data posed on $\{ t=0\}$ for  the equation
\begin{align}\label{eq:linearized-equation-hyperbolic-paraboloid}
	-\partial_{t}^2v + \partial_x^2 v- 4 \frac{-t \partial_t v + x \partial_x v}{1+ t^2 + |x|^2 } =0
\end{align}
 such that the arising solution $v$ satisfies
\begin{align}\label{eq:unbounded-growth}
\sup_{x \in \mathbb R} |v(t,x)| \to \infty
\end{align}
as $t\to\infty$. 
\begin{proof}
We use double null coordinates $\underline{\zeta} =t+x$ and ${\zeta}=t-x$, and we write \eqref{eq:linearized-equation-hyperbolic-paraboloid} as 
\begin{align}\label{eq:linearized-equation-hyperbolic-paraboloid-double-null} 
 \partial_{\underline{\zeta}} \partial_{ {\zeta} }v =  \frac{ {\zeta} \partial_{\underline{\zeta}} v + \underline{\zeta} \partial_{\zeta} v}{1+ \frac{\underline{\zeta}^2}{2}+ \frac{{\zeta}^2}{2}}.
\end{align}
We will now pose characteristic data on $\{ {\zeta} =0, \underline{\zeta} \geq 0\} \cup \{ {\zeta} \geq 0, \underline{\zeta}=0\}$.

On $\{ \underline{\zeta} =0,  {\zeta} \geq 0\}$ we set $v(\underline{\zeta}=0,  {\zeta}) :=  {\zeta} \chi( {\zeta})$, where $\chi\colon \mathbb R \to [0,1]$ is a smooth cut-off satisfying $\chi(x) =1$ for $x\leq 1$ and $\chi(x) =0$ for $x\geq 2$. On  $\{ \underline{\zeta} \geq 0,  {\zeta}=0\}$ we set $v\equiv 0$. Note that these constitute smooth characteristic data for \eqref{eq:linearized-equation-hyperbolic-paraboloid}.

The initial data satisfy $\partial_{ {\zeta}} v = 1 \geq 0$ for $ {\zeta} \in [0,1]$ and $\underline{\zeta}=0$, and  $\partial_{\underline{\zeta}} v = 0$ on $\underline{\zeta} \geq 0$ and ${\zeta}=0$.  
One can directly observe that the equation \eqref{eq:linearized-equation-hyperbolic-paraboloid-double-null} propagates this behavior in the region $\mathcal C:= \{0\leq   {\zeta} \leq 1\} \times \{\underline{\zeta} \geq 0\} $. 
In particular, in the region $\mathcal C$ we have the inequality
\begin{align*}
\partial_{\underline{\zeta}} (\partial_{\zeta} v) \geq \frac{2\underline{\zeta}}{ 3 + {\zeta}^2} \partial_{\zeta} v. 
\end{align*}
Integrating in $\underline{\zeta}$, this yields 
\begin{align*}
\log \partial_{\zeta} v (\underline{\zeta}, {\zeta} )  \geq \int_0^{\underline{\zeta}} \frac{2 \tilde{\underline{\zeta}}} {3 + \tilde{\underline{\zeta}}^2} \d \tilde {\underline{\zeta}} = \log\left(1+ \frac{\underline{\zeta}^2}{3} \right)
\end{align*}
so in particular
\begin{align*}
\partial_{\zeta} v (\underline{\zeta}, {\zeta} ) \geq \frac 13 \underline{\zeta}^2.
\end{align*}
Integrating in ${\zeta}$ and using the initial data finally gives 
\begin{align*}
v (\underline{\zeta}, \zeta ) \geq  \frac 13 \zeta \underline{\zeta}^2
\end{align*}
in the region $\mathcal C$. Going back to $(t,x)$ coordinates, we obtain \eqref{eq:unbounded-growth}. From our construction it is clear that $v$ can also be realized as arising from smooth and compactly supported data on $\{t=0\}$.
\end{proof}
\end{proposition}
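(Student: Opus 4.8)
The plan is to pass to double null coordinates $\underline\zeta=t+x$ and $\zeta=t-x$, in which $\partial_t=\partial_{\underline\zeta}+\partial_\zeta$, $\partial_x=\partial_{\underline\zeta}-\partial_\zeta$, so that $-\partial_t^2+\partial_x^2=-4\,\partial_{\underline\zeta}\partial_\zeta$, while $-t\,\partial_t v+x\,\partial_x v=-\zeta\,\partial_{\underline\zeta}v-\underline\zeta\,\partial_\zeta v$ and $1+t^2+x^2=1+\tfrac12\underline\zeta^2+\tfrac12\zeta^2$. Hence \eqref{eq:linearized-equation-hyperbolic-paraboloid} (with $n=2$) becomes
\[
\partial_{\underline\zeta}\partial_\zeta v \;=\; \frac{\zeta\,\partial_{\underline\zeta}v+\underline\zeta\,\partial_\zeta v}{\,1+\tfrac12\underline\zeta^2+\tfrac12\zeta^2\,}.
\]
Rather than posing Cauchy data on $\{t=0\}$ directly, I would first solve the characteristic (Goursat) problem on the future wedge $\{t\ge|x|\}=\{\underline\zeta\ge0,\ \zeta\ge0\}$ with data concentrated near the origin: $v=\zeta\chi(\zeta)$ on the null ray $\{\underline\zeta=0,\ \zeta\ge0\}$, where $\chi$ is a fixed smooth cutoff with $\chi\equiv1$ on $[0,1]$ and $\chi\equiv0$ on $[2,\infty)$, and $v\equiv0$ on the null ray $\{\zeta=0,\ \underline\zeta\ge0\}$. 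Since the coefficients of \eqref{eq:linearized-equation-hyperbolic-paraboloid} are smooth on all of $\mathbb R^{1+1}$ and the equation is strictly hyperbolic, this Goursat problem is globally well posed and has a unique smooth solution $v$ on the closed quadrant.

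The heart of the argument is a monotonicity-and-Grönwall estimate confined to the half-strip $\mathcal C:=\{0\le\zeta\le1\}\times\{\underline\zeta\ge0\}$. Writing $p:=\partial_{\underline\zeta}v$ and $q:=\partial_\zeta v$, the equation becomes the first-order system $\partial_{\underline\zeta}q=\partial_\zeta p=(\zeta p+\underline\zeta q)/w$ with $w:=1+\tfrac12\underline\zeta^2+\tfrac12\zeta^2>0$; by the Goursat domain of dependence, inside $\mathcal C$ the solution only sees the data with $\zeta\le1$, namely $q\equiv1$ on $\{\underline\zeta=0\}$ and $p\equiv0$ on $\{\zeta=0\}$, both nonnegative, and on $\mathcal C$ the coefficients $\zeta/w,\ \underline\zeta/w$ are nonnegative, so the system is cooperative. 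I would deduce $p,q\ge0$ throughout $\mathcal C$ rigorously via a positivity-preserving Picard iteration for the integral form of the system. Granting this, I drop the nonnegative term $\zeta p$ and use $\zeta^2\le1$ to obtain $\partial_{\underline\zeta}q\ge\frac{2\underline\zeta}{3+\underline\zeta^2}\,q$; comparing with the linear ODE $y'=\frac{2s}{3+s^2}y$, $y(0)=1$, gives $q(\underline\zeta,\zeta)\ge1+\tfrac13\underline\zeta^2$, and integrating $q=\partial_\zeta v$ in $\zeta$ from the boundary value $v|_{\zeta=0}=0$ yields $v(\underline\zeta,\zeta)\ge\tfrac13\,\zeta\,\underline\zeta^2$ on $\mathcal C$. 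Reverting to $(t,x)$ via $\zeta=t-x$, $\underline\zeta=t+x$ and evaluating on $\{\zeta=1\}$, i.e.\ at $x=t-1$ (where $\underline\zeta=2t-1$), gives $v(t,t-1)\ge\tfrac13(2t-1)^2\to\infty$, which is \eqref{eq:unbounded-growth}. Conceptually, the instability is forced by the fact that, in the $t$-variable, the zeroth-order term in \eqref{eq:linearized-operator-hyperbolic-paraboloid} acts as an \emph{anti}-damping for $t>0$, and the null frame makes this amplification manifest.

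Finally, to phrase the conclusion in terms of smooth compactly supported Cauchy data on $\{t=0\}$, I would invoke finite speed of propagation: the characteristics of \eqref{eq:linearized-equation-hyperbolic-paraboloid} are the lines of slope $\pm1$, and the Goursat datum is compactly supported near the origin $(0,0)\in\{t=0\}$, so one may choose smooth compactly supported Cauchy data on $\{t=0\}$ whose evolution reproduces $v$ on any prescribed compact portion of the wedge containing a point $(t,t-1)$ with $t$ as large as desired; since the estimate above shows this evolution is then as large as $\tfrac13(2t-1)^2$ there, \eqref{eq:unbounded-growth} follows. I expect the genuinely substantive point to be the propagation of the two sign conditions $\partial_{\underline\zeta}v\ge0$ and $\partial_\zeta v\ge0$ on $\mathcal C$ (equivalently, the cooperativity of the first-order system together with the nonnegativity of the characteristic data); once these are in place, the differential inequality, the Grönwall comparison, and the passage between characteristic and Cauchy data are all routine.
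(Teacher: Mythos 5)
Your proposal is correct and follows essentially the same argument as the paper: pass to double-null coordinates, pose the same Goursat data, propagate the sign conditions on $\partial_{\underline\zeta}v$ and $\partial_\zeta v$ in the strip $\mathcal C$, and run the same Gr\"onwall comparison to get $v\gtrsim \zeta\underline\zeta^2$. Your version is even slightly cleaner in two minor respects: you spell out the cooperativity/Picard argument that the paper leaves as ``one can directly observe,'' and you write the correct denominator $3+\underline\zeta^2$ in the intermediate differential inequality (the paper's display has $3+\zeta^2$, a typo it silently corrects at the integration step). One caveat: your final sentence phrases the Goursat-to-Cauchy passage as ``for each prescribed compact portion one may choose data,'' which technically produces a family of bounded solutions rather than a single unbounded one; to get \eqref{eq:unbounded-growth} as stated one should fix, once and for all, smooth compactly supported Cauchy data at $\{t=0\}$ (say supported in $[-2,0]$) chosen so that the resulting solution satisfies $\partial_\zeta v\ge c>0$ on the null segment $\{\underline\zeta=0,\ 0\le\zeta\le1\}$ and $\partial_{\underline\zeta}v=0$ on $\{\zeta=0\}$ by finite speed of propagation, and then rerun the Gr\"onwall argument for that single solution---the paper glosses over this in the same way.
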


\begin{remark}
As one can see from the construction above, not only $v$ is unbounded but actually grows faster than $u_{\mathcal S}$ itself. More precisely,
$$
v(t,0)\geq \frac13 t^3 \gg |u_{\mathcal S}(t,0)| \qquad \forall\,t\gg 1,
$$
so $u_{\mathcal S}+\epsilon v$ is never a perturbation of $u_{\mathcal S}$ for $\epsilon \neq 0$.
\end{remark}

\section{Conclusion and perspectives}
Having established the local solvability of \eqref{eq:Gauss-curvature-equation} near a given surface (cf. Theorem~\ref{thm:local-existence}), and having found a natural necessary condition for the existence of localized solutions to the  linearized Gauss curvature equation (cf. \cref{prop:full-localization}), a natural important open question is now the following: 

\smallskip

\noindent
{\it Given a \emph{global} graph $u_{\mathcal S}\colon \mathbb R^n \to \mathbb R$ with Lorentzian Hessian, under what conditions there exists a nearby surface for small and fast decaying perturbations of the Gauss curvature?}

\smallskip

As a preliminary step, also in view of the strategy employed to prove Theorem~\ref{thm:local-existence}, one should analyse the linear problem, and try to understand the linear stability of  $u_{\mathcal S}$.
A sufficient condition for compactly supported perturbation is given by \eqref{eq:orthonality-condition-2}, but one may like to understand whether such condition suffices to guarantee that fast-decaying perturbation of the curvature induce fasts-decaying solutions to the linear problem, or whether more assumptions are needed.

Overall, this problem has a very rich structure, and it would be extremely interesting to find sufficient/necessary conditions for the solvability of the linear/global problem with natural decay properties on the solutions.

\bigskip

\noindent
{\it Acknowledgments.} The first author is supported by the European Research Council under the Grant Agreement No. 721675 ``Regularity and Stability in Partial Differential Equations (RSPDE).'' The second author acknowledges support by a grant from the Institute for Advanced Study, by Dr.~Max R\"ossler, the Walter Haefner Foundation, and the ETH Z\"urich
Foundation.

\printbibliography
\end{document}